\numberwithin{equation}{section}
\definecolor{webgreen}{rgb}{0,.5,0}
\definecolor{webbrown}{rgb}{.6,0,0}
\newcommand{\Z}{{\mathbb Z}}
\newtheorem{thm}{Theorem}
\newtheorem{theorem}[thm]{Theorem}
\newtheorem{corollary}[thm]{Corollary}
\DeclareMathOperator{\Li}{Li}
\title{Relating certain weighted Fibonacci series to Bernoulli polynomials via the polylogarithm function}
\author[]{Kunle Adegoke \\\href{mailto:adegoke00@gmail.com}{\tt adegoke00@gmail.com}}
\affil{Department of Physics and Engineering Physics, \mbox{Obafemi Awolowo University}, 220005 Ile-Ife, Nigeria}
\begin{document}
\date{}

\maketitle

\begin{abstract} \noindent
We show that certain weighted Fibonacci and Lucas series can always be expressed as linear combinations of polylogarithms. In some special cases we evaluate the series in terms of Bernoulli polynomials, making use of the connection between these polynomials and the polylogarithm.  
\end{abstract}
\section{Introduction}
This paper is concerned mainly with relating weighted Fibonacci and Lucas series of the form
\[
\sum_{j = 1}^\infty  {\frac{{( - 1)^{j - 1} }}{{j^k }}F_{rj} } \mbox{ and } \sum_{j = 1}^\infty  {\frac{{( - 1)^{j - 1} }}{{j^k }}L_{rj} }\,,
\]
and related series to Bernoulli polynomials through the polylogarithm function. Here $r$ and $k$ are integers and $F_n$ and $L_n$ are Fibonacci and Lucas numbers.

\medskip

Among other results of a similar nature, we will establish that
\[
\sum_{j = 1}^\infty  {\frac{{( - 1)^{j - 1} }}{{j^k }}F_{rj} }  =   \frac{{(2\pi i)^k }}{{k!\sqrt 5 }}B_k \left( {\frac{1}{2} + \frac{{r\log \alpha }}{{2\pi i}}} \right),\quad \mbox{$k$ odd, $k\ge 1$}\,,
\]
and
\[
\sum_{j = 1}^\infty  {\frac{{( - 1)^{j - 1} }}{{j^k }}L_{rj} }  =   \frac{{(2\pi i)^k }}{{k!}}B_k \left( {\frac{1}{2} + \frac{{r\log \alpha }}{{2\pi i}}} \right),\quad \mbox{$k$ even, $k\ge 0$}\,,
\]
where $r$ is an even integer, $\alpha$ is the golden ratio and $B_n(x)$ is the $n^{th}$ Bernoulli polynomial in $x$.

\medskip

The Fibonacci numbers, $F_n$, and the Lucas numbers, $L_n$, are defined, for \mbox{$n\in\Z$}, through the recurrence relations 
\begin{equation}\label{eq.s6z1bcx}
F_n=F_{n-1}+F_{n-2}, \mbox{($n\ge 2$)},\quad\mbox{$F_0=0$, $F_1=1$};
\end{equation}
and
\begin{equation}
L_n=L_{n-1}+L_{n-2}, \mbox{($n\ge 2$)},\quad\mbox{$L_0=2$, $L_1=1$};
\end{equation}
with
\begin{equation}
F_{-n}=(-1)^{n-1}F_n\,,\quad L_{-n}=(-1)^nL_n\,.
\end{equation}
Throughout this paper, we denote the golden ratio, $(1+\sqrt 5)/2$, by $\alpha$ and write $\beta=(1-\sqrt 5)/2=-1/\alpha$, so that $\alpha\beta=-1$ and $\alpha+\beta=1$. 

\medskip

Explicit formulas (Binet formulas) for the Fibonacci and Lucas numbers are
\begin{equation}
F_n  = \frac{{\alpha ^n  - \beta ^n }}{{\alpha  - \beta }},\quad L_n  = \alpha ^n  + \beta ^n,\quad n\in\Z\,.
\end{equation}
Koshy \cite{koshy} and Vajda \cite{vajda} have written excellent books dealing with Fibonacci and Lucas numbers.

\medskip

As will be shown in Theorem \ref{thm.main}, sums of the form
\[
\sum_{j = 1}^\infty  {\frac{{z^{j - 1} }}{{j^k }}F_{rj+s} } \mbox{ and } \sum_{j = 1}^\infty  {\frac{{z^{j - 1} }}{{j^k }}L_{rj+s} }\,,
\]
can always be expressed as linear combinations of polylogarithms. For $z=\pm 1$, $s=0$ and a definite parity of (non-negative) $k$, the series are expressible in terms of the Bernoulli polynomials, for even $r$ (since $\alpha^r=1/\beta^r=t$ for even $r$), in view of identities \eqref{eq.bernpoly1} and \eqref{eq.bernpoly2}. For a general $z$ and arbitrary $k$, the series can be evaluated in terms of elementary functions whenever the same is true about the corresponding linear combination of polylogarithms, as demonstrated in the following examples, presented in the later part of section~\ref{sec.others}:
\begin{itemize}
\item Theorem \ref{thm.xzk5qh8} on page \pageref{thm.xzk5qh8}:
\[
\sum_{j = 1}^\infty  {\frac{{( - 1)^{j-1} }}{{j^2 }}F_{j + s} }  = F_s \log ^2 \alpha  + \frac{{\pi ^2 }}{{50}}L_s \sqrt 5,\quad s\in\Z\,;
\]
\item Theorem \ref{thm.vpcaqaj} on page \pageref{thm.vpcaqaj}:
\[
\sum_{j = 1}^\infty  {\frac{{L_j }}{{2^j j^2 }}}  = \frac{{\pi ^2 }}{{12}} + 2\log ^2 \alpha  - \log ^2 2\,;
\]
\item Theorem \ref{thm.j7p1nrz} on page \pageref{thm.j7p1nrz}:
\[
\sum_{j = 1}^\infty  {\frac{{L_{rj} }}{{L_r^j j^2 }}}  = \frac{{\pi ^2 }}{6} - \log (\alpha ^r /L_r )\log (\beta ^r /L_r ),\quad \mbox{$r$ even}\,;
\]
\item Theorem \ref{thm.ejojlyt} on page \pageref{thm.ejojlyt}:
\[
\mbox{ }\sum_{j = 1}^\infty  {\frac{{( - 1)^{j - 1} }}{{j^3 }}L_j }  = \frac{1}{5}\left( {\pi ^2 \log \alpha  - \zeta (3)} \right)\,.
\]
\end{itemize}
For real or complex order $k$ and argument $z$, the polylogarithm $\Li_k(z)$ is defined by
\begin{equation}\label{eq.zz0im7f}
\Li_k (z) = \sum_{j = 1}^\infty  {\frac{{z^j }}{{j^k }}},\quad|z|<1\,.
\end{equation}
The series \eqref{eq.zz0im7f} also converges when $|z|=1$, provided that $\Re k>1$. For $|z|>1$, $\Li_k(z)$ is defined by analytic continuation.
The special case $k=1$ involves the natural logarithm, $\Li_1(z)=-\log(1-z)$, while $k=2$ and $k=3$ are called the dilogarithm and the trilogarithm. For non-positive integer orders $k$, the polylogarithm is a rational function. In fact,
\begin{equation}
\Li_0 (z) = \frac{z}{{1 - z}}\,,
\end{equation}

\begin{equation}
\Li_{ - 1} (z) = \frac{z}{{(1 - z)^2 }}\,,
\end{equation}

\begin{equation}
\Li_{ - 2} (z) = \frac{{z(1 + z)}}{{(1 - z)^3 }}\,,
\end{equation}
and more generally,
\begin{equation}
\Li_{ - n} (z) = \left( {z\frac{d}{{dz}}} \right)^n \frac{z}{{1 - z}}\,.
\end{equation}
The book by Lewin \cite{lewin81} is a rich source of information on the polylogarithm function.

\medskip

The Bernoulli numbers, $B_k$, are defined by the generating function
\begin{equation}
\frac{z}{{e^z  - 1}} = \sum_{k = 0}^\infty  {B_k \frac{{z^k }}{{k!}}},\quad z<2\pi\,,
\end{equation}
and the Bernoulli polynomials by the generating function
\begin{equation}
\frac{{ze^{xz} }}{{e^z  - 1}} = \sum_{k = 0}^\infty  {B_k (x)\frac{{z^k }}{{k!}}},\quad|z|<2\pi\,.
\end{equation}
Clearly, $B_k=B_k(0)$.

\medskip

The first few Bernoulli numbers are
\begin{equation}
B_0  = 1,\,B_1  =  - \frac{1}{2},\,B_2  = \frac{1}{6},\,B_3  = 0,\,B_4  =  - \frac{1}{{30}},\,B_5  = 0,\,B_6  = \frac{1}{{42}},\,B_7  = 0,\,\ldots\,,
\end{equation}
while the first few Bernoulli polynomials are
\begin{equation}
\begin{split}
&B_0 (x)= 1,\quad B_1 (x) = x - \frac{1}{2},\quad B_2 (x) = x^2  - x + \frac{1}{6},\quad B_3 (x) = x^3  - \frac{3}{2}x^2  + \frac{1}{2}x\,,\\
&B_4 (x) = x^4  - 2x^3  + x^2  - \frac{1}{{30}},\quad B_5 (x) = x^5  - \frac{5}{2}x^4  + \frac{5}{3}x^3  - \frac{1}{6}x\,,\\
& B_6 (x) = x^6  - 3x^5  + \frac{5}{2}x^4  - \frac{1}{2}x^2  + \frac{1}{{42}}\,.
\end{split}
\end{equation}
An explicit formula for the Bernoulli polynomials is
\begin{equation}
B_k (x) = \sum_{j = 0}^k {\binom kjB_j x^{k - j} }\,,
\end{equation}
while a recurrence formula for them is
\begin{equation}
B_k (x + 1) = \sum_{j = 0}^k {\binom kjB_j (x)}\,.
\end{equation}
Some of the main results in this paper derive from the following relationship between the polylogarithm and the Bernoulli polynomials (see Lewin~\cite[equation 7.192]{lewin81}, where we have corrected a misprint):
\begin{equation}\label{eq.bernpoly1}
\Li_k (t) + ( - 1)^k \Li_k (1/t) =  - \frac{{(2\pi i)^k }}{{k!}}B_k \left( {\frac{{\log t}}{{2\pi i}}} \right),\quad\mbox{$t\le1$}\,,
\end{equation}
and
\begin{equation}\label{eq.bernpoly2}
\Li_k (t) + ( - 1)^k \Li_k (1/t) =  - \frac{{(2\pi i)^k }}{{k!}}B_k \left( { - \frac{{\log t}}{{2\pi i}}} \right),\quad t>1\,.
\end{equation}
Basic properties of the Bernoulli polynomials are highlighted in recent articles by Frontczak~\cite{frontczak19} and by Frontczak and Goy~\cite{frontczak20} where new identities involving Fibonacci and Bernoulli numbers, and Lucas and Euler numbers are presented. Additional information on Bernoulli polynomials can be found in Erd\'elyi et al \cite[\S1.13]{erdelyi}. 
\section{Main results}
\begin{theorem}\label{thm.main}
Let $r$, $k$ and $s$ be integers and $z$ a real or complex variable such that $|z|<\alpha^{-r}$. Then,
\begin{equation}\label{eq.h9ujlfl}
\begin{split}
\sum_{j = 1}^\infty  {\frac{{z^j }}{{j^k }} F_{rj + s}}  &= \frac{{F_s }}{2}\left( {\Li_k (\alpha ^r z) + \Li_k (\beta ^r z)} \right)\\
&\qquad+ \frac{{L_s }}{{2\sqrt 5 }}\left( {\Li_k (\alpha ^r z) - \Li_k (\beta ^r z)} \right)\,,
\end{split}
\end{equation}

\begin{equation}\label{eq.nr41hks}
\begin{split}
\sum_{j = 1}^\infty  {\frac{{z^j }}{{j^k }} L_{rj + s}}  &= \frac{{L_s }}{2}\left( {\Li_k (\alpha ^r z) + \Li_k (\beta ^r z)} \right)\\
&\qquad+ \frac{{F_s \sqrt 5 }}{2}\left( {\Li_k (\alpha ^r z) - \Li_k (\beta ^r z)} \right)\,.
\end{split}
\end{equation}
\end{theorem}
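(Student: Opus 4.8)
The plan is to reduce both identities to the defining series \eqref{eq.zz0im7f} of the polylogarithm by means of the Binet formulas. Writing $F_{rj+s} = (\alpha^{rj+s} - \beta^{rj+s})/\sqrt5 = \bigl(\alpha^s(\alpha^r)^j - \beta^s(\beta^r)^j\bigr)/\sqrt5$, I would substitute this into the left-hand side of \eqref{eq.h9ujlfl} and split the single series into two. Factoring the constants $\alpha^s$ and $\beta^s$ out of their respective sums and absorbing $(\alpha^r)^j$ and $(\beta^r)^j$ into the powers $z^j$, each resulting sum is exactly of the form $\sum_{j\ge1}(\alpha^r z)^j/j^k$ or $\sum_{j\ge1}(\beta^r z)^j/j^k$, that is, $\Li_k(\alpha^r z)$ and $\Li_k(\beta^r z)$. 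This yields the intermediate formula
\[
\sum_{j=1}^\infty \frac{z^j}{j^k}F_{rj+s} = \frac{\alpha^s}{\sqrt5}\Li_k(\alpha^r z) - \frac{\beta^s}{\sqrt5}\Li_k(\beta^r z)\,,
\]
and the analogous computation for \eqref{eq.nr41hks}, using $L_{rj+s} = \alpha^s(\alpha^r)^j + \beta^s(\beta^r)^j$, gives $\alpha^s\Li_k(\alpha^r z) + \beta^s\Li_k(\beta^r z)$.

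The remaining task is purely algebraic: to recast these ``$\alpha^s,\beta^s$'' forms into the stated ``$F_s,L_s$'' forms. Here I would invoke the inverse Binet relations $\alpha^s = (L_s + \sqrt5\,F_s)/2$ and $\beta^s = (L_s - \sqrt5\,F_s)/2$, which follow immediately from $L_s = \alpha^s+\beta^s$ and $\sqrt5\,F_s = \alpha^s - \beta^s$. Substituting these and collecting the coefficients of $\Li_k(\alpha^r z)$ and $\Li_k(\beta^r z)$ reproduces precisely the coefficients $\tfrac{F_s}{2}\pm\tfrac{L_s}{2\sqrt5}$ appearing in \eqref{eq.h9ujlfl} and $\tfrac{L_s}{2}\pm\tfrac{F_s\sqrt5}{2}$ in \eqref{eq.nr41hks}. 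Equivalently, one can verify the identities by starting from the right-hand sides, grouping the polylogarithm terms, and recognizing $\tfrac{F_s}{2}+\tfrac{L_s}{2\sqrt5} = \alpha^s/\sqrt5$, and so on; the two directions are the same elementary linear computation.

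The only real subtlety, and the step I expect to require the most care, is the splitting of a single convergent series into the difference (or sum) of two series, i.e.\ the termwise rearrangement. The hypothesis $|z| < \alpha^{-r}$ is exactly what controls this: since $|\beta| = \alpha^{-1}$, it gives $|\alpha^r z| = \alpha^r|z| < 1$, and for the $\beta$-series $|\beta^r z| = \alpha^{-r}|z| < 1$ as well (at least when $r\ge0$), so both polylogarithm series converge absolutely for every integer $k$, the geometric decay $|z|^j$ dominating the factor $j^{-k}$. Absolute convergence legitimizes both the splitting and the factoring out of $\alpha^s,\beta^s$. For arguments of modulus exceeding $1$, where $\Li_k$ is defined by analytic continuation, I would instead establish the identity on the open region where the series converge and extend it by the identity theorem, both sides being analytic in $z$.
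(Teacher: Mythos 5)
Your proof is correct and is essentially the paper's own argument run in the opposite direction: the paper expands $\alpha^{rj+s}$ and $\beta^{rj+s}$ via $\alpha^m=(L_m+F_m\sqrt5)/2$, $\beta^m=(L_m-F_m\sqrt5)/2$ and solves the resulting pair of equations, whereas you substitute Binet's formula and split the series, but both routes produce the same intermediate identities $\sqrt5\sum_{j\ge1} z^jF_{rj+s}/j^k=\alpha^s\Li_k(\alpha^rz)-\beta^s\Li_k(\beta^rz)$ and $\sum_{j\ge1} z^jL_{rj+s}/j^k=\alpha^s\Li_k(\alpha^rz)+\beta^s\Li_k(\beta^rz)$, followed by the identical resolution of $\alpha^s,\beta^s$ into $F_s,L_s$. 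Your closing remarks on absolute convergence (including the observation that $|z|<\alpha^{-r}$ guarantees $|\beta^rz|<1$ only for $r\ge0$) are more careful than anything in the paper, but they do not change the method.
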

\begin{proof}
We have
\begin{equation}\label{eq.irdpf3d}
\alpha ^s \Li_k (\alpha ^r z) = \sum_{j = 1}^\infty  {\frac{{z^j }}{{j^k }} \alpha ^{rj + s}}  = \frac{1}{2}\sum_{j = 1}^\infty  {\frac{{z^j }}{{j^k }} L_{rj + s}}  + \frac{{\sqrt 5 }}{2}\sum_{j = 1}^\infty  {\frac{{z^j }}{{j^k }} F_{rj + s}}\,,
\end{equation}
and
\begin{equation}\label{eq.frq62gc}
\beta ^s \Li_k (\beta ^r z) = \sum_{j = 1}^\infty  {\frac{{z^j }}{{j^k }} \beta ^{rj + s}}  = \frac{1}{2}\sum_{j = 1}^\infty  {\frac{{z^j }}{{j^k }} L_{rj + s}}  - \frac{{\sqrt 5 }}{2}\sum_{j = 1}^\infty  {\frac{{z^j }}{{j^k }} F_{rj + s}}\,;
\end{equation}
where we have used the fact that, for any integer $m$,
\begin{equation}\label{eq.we8ccor}
\alpha ^m  = \frac{{L_m  + F_m \sqrt 5 }}{2}\mbox{ and }\beta ^m  = \frac{{L_m  - F_m \sqrt 5 }}{2}\,.
\end{equation}
From \eqref{eq.irdpf3d} and \eqref{eq.frq62gc} we get
\begin{equation}\label{eq.z444ygq}
\sqrt 5 \sum_{j = 1}^\infty  {\frac{{z^j }}{{j^k }} F_{rj + s}}  = \alpha ^s \Li_k (\alpha ^r z) - \beta ^s \Li_k (\beta ^r z)\,,
\end{equation}
and
\begin{equation}\label{eq.x42cf1i}
\sum_{j = 1}^\infty  {\frac{{z^j }}{{j^k }} L_{rj + s}}  = \alpha ^s \Li_k (\alpha ^r z) + \beta ^s \Li_k (\beta ^r z)\,,
\end{equation}
from which identities \eqref{eq.h9ujlfl} and \eqref{eq.nr41hks} follow from the application of identities~\eqref{eq.we8ccor} to resolve $\alpha^s$ and $\beta^s$.
\end{proof}
Setting $s=0$ in \eqref{eq.z444ygq} and \eqref{eq.x42cf1i} we get the particular cases
\begin{equation}\label{eq.partfibo}
\sum_{j = 1}^\infty  {\frac{{z^j }}{{j^k }} F_{rj} }  = \frac{1}{{\sqrt 5 }}\left( {\Li_k (\alpha ^r z) - \Li_k (\beta ^r z)} \right)\,,
\end{equation}
and
\begin{equation}\label{eq.partluca}
\sum_{j = 1}^\infty  {\frac{{z^j }}{{j^k }} L_{rj} }  = \Li_k (\alpha ^r z) + \Li_k (\beta ^r z)\,.
\end{equation}
\begin{theorem}
Let $r$ be an even integer and $k$ a non-negative integer. Then
\begin{equation}\label{eq.aujl7hb}
\sum_{j = 1}^\infty  {\frac{{( - 1)^{j - 1} }}{{j^k }}F_{rj} }  =   \frac{{(2\pi i)^k }}{{k!\sqrt 5 }}B_k \left( {\frac{1}{2} + \frac{{r\log \alpha }}{{2\pi i}}} \right),\quad \mbox{$k$ odd}\,,
\end{equation}

\begin{equation}\label{eq.cq80nxw}
\sum_{j = 1}^\infty  {\frac{{( - 1)^{j - 1} }}{{j^k }}L_{rj} }  =   \frac{{(2\pi i)^k }}{{k!}}B_k \left( {\frac{1}{2} + \frac{{r\log \alpha }}{{2\pi i}}} \right),\quad \mbox{$k$ even}\,.
\end{equation}
\end{theorem}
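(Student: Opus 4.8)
The plan is to specialize the particular cases \eqref{eq.partfibo} and \eqref{eq.partluca} to the value $z=-1$ and then convert the resulting polylogarithm combinations into Bernoulli polynomials by means of the reflection identity \eqref{eq.bernpoly1}. Writing $(-1)^{j-1}=-(-1)^j$ and putting $z=-1$, I would first record
\[
\sum_{j=1}^\infty \frac{(-1)^{j-1}}{j^k} F_{rj} = \frac{1}{\sqrt5}\bigl(\Li_k(-\beta^r) - \Li_k(-\alpha^r)\bigr), \qquad
\sum_{j=1}^\infty \frac{(-1)^{j-1}}{j^k} L_{rj} = -\bigl(\Li_k(-\alpha^r) + \Li_k(-\beta^r)\bigr),
\]
where the polylogarithms at arguments of modulus exceeding one are understood via the analytic continuation of the polylogarithm.

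The decisive point is the hypothesis that $r$ is even. Then $\alpha^r\beta^r=(\alpha\beta)^r=(-1)^r=1$, so $\beta^r=1/\alpha^r$ and hence $-\beta^r=1/(-\alpha^r)$. Thus, setting $t=-\alpha^r$, the two polylogarithms above are precisely $\Li_k(t)$ and $\Li_k(1/t)$, the combination to which \eqref{eq.bernpoly1} applies. Since $t=-\alpha^r$ is a negative real number, in particular $t\le 1$, the identity \eqref{eq.bernpoly1} is the relevant branch and gives
\[
\Li_k(-\alpha^r) + (-1)^k \Li_k(-\beta^r) = -\frac{(2\pi i)^k}{k!} B_k\!\left(\frac{\log(-\alpha^r)}{2\pi i}\right).
\]

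It then remains to evaluate the logarithm and to split according to the parity of $k$. Using the principal branch, $\log(-\alpha^r)=r\log\alpha+i\pi$, so the argument of the Bernoulli polynomial collapses to $\tfrac12+\tfrac{r\log\alpha}{2\pi i}$, exactly the argument appearing in \eqref{eq.aujl7hb} and \eqref{eq.cq80nxw}. For $k$ odd, $(-1)^k=-1$ turns the left side into the difference $\Li_k(-\alpha^r)-\Li_k(-\beta^r)$, which I substitute into the Fibonacci line to obtain \eqref{eq.aujl7hb}; for $k$ even, $(-1)^k=+1$ turns it into the sum $\Li_k(-\alpha^r)+\Li_k(-\beta^r)$, which I substitute into the Lucas line to obtain \eqref{eq.cq80nxw}.

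I anticipate that the only genuinely delicate step is the evaluation of the logarithm: fixing the correct branch of $\log(-\alpha^r)$, so that the $i\pi$ — and hence the $\tfrac12$ — appears with the right sign, and confirming that the condition $t\le1$ selects \eqref{eq.bernpoly1} rather than \eqref{eq.bernpoly2}. The evenness of $r$ is what makes the two polylogarithm arguments genuine reciprocals, and the parity of $k$ is what matches the combination $\Li_k(t)+(-1)^k\Li_k(1/t)$ in the reflection formula; for the complementary parities the formula produces the opposite sign pattern and no single Bernoulli polynomial results, which is exactly why the statement is confined to $k$ odd for $F_{rj}$ and $k$ even for $L_{rj}$.
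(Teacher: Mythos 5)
Your proposal is correct and follows essentially the same route as the paper: specialize \eqref{eq.partfibo} and \eqref{eq.partluca} at $z=-1$, use the evenness of $r$ to get $-\beta^r = 1/(-\alpha^r)$, and apply \eqref{eq.bernpoly1} with the parity of $k$ selecting the Fibonacci or Lucas case. You in fact spell out the one step the paper leaves implicit, namely the branch evaluation $\log(-\alpha^r)=r\log\alpha+i\pi$ that produces the $\tfrac12$ in the Bernoulli argument, which strengthens rather than alters the argument.
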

\begin{proof}
Identity \eqref{eq.partfibo} with $z=-1$ gives
\[
\sum_{j = 1}^\infty  {\frac{{( - 1)^j }}{{j^k }}F_{rj} }  = \frac{1}{{\sqrt 5 }}\left( {\Li_k ( - \alpha ^r ) - \Li_k ( - \beta ^r )} \right)\,,
\]
which, since $\beta^r=1/\alpha^r$ if $r$ is even, means
\[
\sum_{j = 1}^\infty  {\frac{{( - 1)^j }}{{j^k }}F_{rj} }  = \frac{1}{{\sqrt 5 }}\left( {\Li_k ( - \alpha ^r ) - \Li_k ( - 1/\alpha ^r )} \right),\quad\mbox{$r$ even}\,,
\]
from which, if $k$ is an odd non-negative integer, identity \eqref{eq.aujl7hb} follows on account of relation~\eqref{eq.bernpoly1}. The proof of identity~\eqref{eq.cq80nxw} is similar; we use $z=-1$ in identity~\eqref{eq.partluca} and use~\eqref{eq.bernpoly1} with $k$ even.
\end{proof}
\begin{theorem}
Let $r$ be an integer and $k$ a non-negative integer. Then,
\begin{equation}\label{eq.l3evr8n}
\sum_{j = 1}^\infty  {\left( {\frac{{F_{(4j - 2)r} }}{{(4j - 2)^k }} - \frac{{F_{4jr} }}{{(4j)^k }}} \right)}  = \frac{{(2\pi i)^k }}{{k!2^k\sqrt 5 }}B_k \left( {\frac{1}{2} + \frac{{r\log \alpha }}{{\pi i}}} \right),\quad\mbox{$k$ odd}\,,
\end{equation}
\begin{equation}\label{eq.oe1nr9e}
\sum_{j = 1}^\infty  {\left( {\frac{{L_{(4j - 2)r} }}{{(4j - 2)^k }} - \frac{{L_{4jr} }}{{(4j)^k }}} \right)}  = \frac{{(2\pi i)^k }}{{k!2^k}}B_k \left( {\frac{1}{2} + \frac{{r\log \alpha }}{{\pi i}}} \right),\quad\mbox{$k$ even}\,.
\end{equation}
\end{theorem}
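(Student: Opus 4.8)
The plan is to recognise that the two-term summand on each left-hand side is exactly what one obtains by grouping a single alternating series over \emph{even} indices into consecutive pairs; this reduces both identities to the preceding theorem applied with $r$ replaced by the even integer $2r$. For the Fibonacci case I would set $b_m = (-1)^{m-1} F_{2rm}/(2m)^k$ and observe that
\[
b_{2j-1} + b_{2j} = \frac{F_{(4j-2)r}}{(4j-2)^k} - \frac{F_{4jr}}{(4j)^k}\,,
\]
so that the left side of \eqref{eq.l3evr8n} is precisely the pairwise grouping $\sum_{j=1}^\infty (b_{2j-1}+b_{2j})$ of $\sum_{m=1}^\infty b_m$. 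Pulling the factor $2^{-k}$ out of $(2m)^{-k}$ then yields
\[
\sum_{j=1}^\infty \left(\frac{F_{(4j-2)r}}{(4j-2)^k} - \frac{F_{4jr}}{(4j)^k}\right) = \frac{1}{2^k}\sum_{m=1}^\infty \frac{(-1)^{m-1}}{m^k} F_{2rm}\,,
\]
and the identical manipulation with $L$ in place of $F$ handles the Lucas case \eqref{eq.oe1nr9e}.

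Next I would invoke the previous theorem. Since $2r$ is even for \emph{every} integer $r$, identity \eqref{eq.aujl7hb} applies with its $r$ taken to be $2r$ (and $k$ odd), giving
\[
\sum_{m=1}^\infty \frac{(-1)^{m-1}}{m^k} F_{2rm} = \frac{(2\pi i)^k}{k!\sqrt 5}\, B_k\!\left(\frac12 + \frac{2r\log\alpha}{2\pi i}\right) = \frac{(2\pi i)^k}{k!\sqrt 5}\, B_k\!\left(\frac12 + \frac{r\log\alpha}{\pi i}\right)\,,
\]
so multiplication by $2^{-k}$ produces exactly the right side of \eqref{eq.l3evr8n}. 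Applying \eqref{eq.cq80nxw} in the same way (again $r\to 2r$, now $k$ even) gives \eqref{eq.oe1nr9e}. The structural point worth stressing is that doubling the step removes the evenness hypothesis on $r$ demanded by the earlier theorem: here $r$ ranges over all integers precisely because $2r$ is automatically even, while the substitution $2r\log\alpha/(2\pi i)=r\log\alpha/(\pi i)$ accounts for the changed argument of $B_k$.

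The only genuine subtlety is the grouping step, which is the step I expect to require the most care. Grouping a series into consecutive pairs preserves its value whenever the ungrouped series converges, which holds in the classical sense as soon as $\alpha^{2rm}/m^k\to 0$ (e.g.\ for $r<0$); for the remaining values of $r$ the identities are to be read at the level of the analytically continued polylogarithm expressions underlying \eqref{eq.aujl7hb} and \eqref{eq.cq80nxw}, exactly as in the preceding theorem. Beyond this I anticipate no further obstacle, since once the left-hand side has been rewritten as $2^{-k}\sum_{m}(-1)^{m-1}m^{-k}F_{2rm}$ the conclusion is immediate from the earlier evaluation.
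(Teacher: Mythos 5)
Your argument is correct at the paper's own level of rigor, and it takes a genuinely different route from the paper's proof. The paper does not reduce this theorem to the preceding one: instead it sets $z=i$ in \eqref{eq.partfibo} (resp.\ \eqref{eq.partluca}), takes real parts, and invokes Lewin's formula $\Re\Li_k(ye^{i\pi/2})=2^{-k}\Li_k(-y^2)$ to get $\sum_{j\ge1}\cos(j\pi/2)F_{rj}/j^k=\frac{1}{2^k\sqrt5}\left(\Li_k(-\alpha^{2r})-\Li_k(-\beta^{2r})\right)$, after which \eqref{eq.bernpoly1} and the regrouping $\sum_{j\ge1}f(j)\cos(j\pi/2)\equiv\sum_{j\ge1}\left(f(4j)-f(4j-2)\right)$ give the result. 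You replace the complex substitution and the appeal to Lewin by the observation that the left-hand sides are precisely the paired forms of $2^{-k}\sum_{m\ge1}(-1)^{m-1}F_{2rm}/m^k$ and $2^{-k}\sum_{m\ge1}(-1)^{m-1}L_{2rm}/m^k$, to which \eqref{eq.aujl7hb} and \eqref{eq.cq80nxw} apply with $r\mapsto 2r$ (automatically even). This is shorter, needs no formula external to the paper, and explains structurally why the evenness hypothesis on $r$ disappears and where the factor $2^{-k}$ and the doubled argument of $B_k$ come from; the two routes are of course closely related, since the paper's $\cos(j\pi/2)$ identity is exactly your pairing in disguise.

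One slip in your convergence discussion: it is not true that the ungrouped series converges classically when $r<0$. Since $F_{-n}=(-1)^{n-1}F_n$, one has $|F_{2rm}|=F_{2|r|m}\sim\alpha^{2|r|m}/\sqrt5$, so the terms of $\sum_{m\ge1}(-1)^{m-1}F_{2rm}/(2m)^k$ are unbounded for every $r\ne0$ --- and the same is true of the grouped series appearing in the theorem itself. Hence for all $r\ne0$ both \eqref{eq.l3evr8n} and \eqref{eq.oe1nr9e} (and likewise \eqref{eq.aujl7hb} and \eqref{eq.cq80nxw}) can only be read through the analytic-continuation convention, which is your stated fallback; on that reading your ungrouping step is the same kind of formal manipulation as the paper's own, so your proof stands on exactly the same footing as the paper's, but the claim of classical convergence for $r<0$ should be deleted.
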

\begin{proof}
Setting $z=i$ (the imaginary unit) in identity \eqref{eq.partfibo} and taking real parts, we have
\begin{equation}
\sum_{j = 1}^\infty  {\frac{{\cos (j\pi /2)}}{{j^k }}F_{rj} }  = \frac{1}{{\sqrt 5 }}\left( {\Re \Li_k (\alpha ^r e^{i\pi /2} ) - \Re \Li_k (\beta ^r e^{i\pi /2} )} \right)\,.
\end{equation}
For a real variable $y$, (Lewin \cite[page 300, Formula (33)]{lewin81}) :
\begin{equation}
\Re \Li_k (ye^{i\pi /2} ) = \frac{1}{{2^k }}\Li_k ( - y^2 )\,.
\end{equation}
Thus, we have
\[
\sum_{j = 1}^\infty  {\frac{{\cos (j\pi /2)}}{{j^k }}F_{rj} }  = \frac{1}{{2^k\sqrt 5 }}\left( {\Li_k ( - \alpha ^{2r} ) - \Li_k ( - \beta ^{2r} )} \right)\,,
\]
from which identity~\eqref{eq.l3evr8n} now follows from relation~\eqref{eq.bernpoly1} and the fact that
\[
\sum_{j = 1}^\infty  {f(j)\cos (j\pi /2)}  \equiv \sum_{j = 1}^\infty  {\left( {f(4j) - f(4j - 2)} \right)} \,,
\]
for an arbitrary real sequence $f(j)$. The proof of identity~\eqref{eq.oe1nr9e} is similar; we set $z=i$ in \eqref{eq.partluca} and take real parts.
\end{proof}
\begin{theorem}\label{thm.nqsu925}
Let $k$ be a non-negative even integer. Let $r$ and $s$ be integers having the same parity. Then,
\[
\sum_{j = 1}^\infty  {\frac{{( - 1)^{j - 1} }}{{j^k }}F_{rj} F_{sj} }  = \left\{ \begin{array}{l}
 \frac{{(2\pi i)^k }}{{k!}}\left( {\frac{1}{5}B_k \left( {\frac{1}{2} + \frac{{(s + r)\log \alpha }}{{2\pi i}}} \right) - \frac{1}{5}B_k \left( {\frac{1}{2} + \frac{{(s - r)\log \alpha }}{{2\pi i}}} \right)} \right)\,,\quad \mbox{$r$ even}\,, \\ 
\\
 \frac{{(2\pi i)^k }}{{k!}}\left( {\frac{1}{5}B_k \left( {\frac{1}{2} + \frac{{(s + r)\log \alpha }}{{2\pi i}}} \right) - \frac{1}{5}B_k \left( {\frac{{(s - r)\log \alpha }}{{2\pi i}}} \right)} \right)\,,\quad \mbox{$r$ odd, $s\le r$}\,. \\ 
 \end{array} \right.
\]

\end{theorem}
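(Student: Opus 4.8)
The plan is to express the product $F_{rj}F_{sj}$ through the Binet formula as a combination of pure powers of $\alpha$, so that the weighted sum decomposes into polylogarithms of the symmetric type $\Li_k(t)+\Li_k(1/t)$ that relation \eqref{eq.bernpoly1} is built to handle. First I would write $F_{rj}F_{sj} = \tfrac{1}{5}(\alpha^{rj}-\beta^{rj})(\alpha^{sj}-\beta^{sj})$, expand into the four terms $\alpha^{(r+s)j}$, $\beta^{(r+s)j}$, $\alpha^{rj}\beta^{sj}$, $\beta^{rj}\alpha^{sj}$, and eliminate $\beta$ using $\beta=-1/\alpha$. Since $r$ and $s$ share parity, both $r+s$ and $s-r$ are even, and the decisive difference between the two cases lies entirely in the cross terms. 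For $r$ (hence $s$) even one obtains
\[
F_{rj}F_{sj} = \frac{1}{5}\left(\alpha^{(r+s)j} + \alpha^{-(r+s)j} - \alpha^{(s-r)j} - \alpha^{-(s-r)j}\right),
\]
whereas for $r$ (hence $s$) odd the cross terms acquire a factor $(-1)^j$ because $\beta^{rj}=(-1)^j\alpha^{-rj}$:
\[
F_{rj}F_{sj} = \frac{1}{5}\left(\alpha^{(r+s)j} + \alpha^{-(r+s)j} - (-1)^j\alpha^{(s-r)j} - (-1)^j\alpha^{-(s-r)j}\right).
\]

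Next I would multiply by $(-1)^{j-1}/j^k$, sum over $j$, and read off each geometric-type piece as a polylogarithm. In the even case the alternating weight sends every term to $\Li_k$ at a negative argument, producing the two symmetric pairs $\Li_k(-\alpha^{\pm(r+s)})$ and $\Li_k(-\alpha^{\pm(s-r)})$. In the odd case the extra $(-1)^j$ on the $(s-r)$ terms combines with the alternating weight $(-1)^{j-1}$ into a constant sign, so those terms instead assemble into $\Li_k(\alpha^{\pm(s-r)})$ at \emph{positive} argument, while the $(r+s)$ terms stay at negative argument exactly as in the even case. Applying \eqref{eq.bernpoly1} with $k$ even (so $(-1)^k=1$) to each pair then converts it to a Bernoulli polynomial: for a negative argument $t=-\alpha^m$ one has $\log t = m\log\alpha + i\pi$, giving the shifted value $B_k(\tfrac{1}{2} + \tfrac{m\log\alpha}{2\pi i})$, whereas for the positive argument $t=\alpha^{s-r}$ there is no imaginary part and one gets $B_k(\tfrac{(s-r)\log\alpha}{2\pi i})$ without the $\tfrac{1}{2}$. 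Collecting the four contributions then reproduces the two stated formulas.

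The main obstacle is the branch-and-domain bookkeeping in this last step, and it is precisely what dictates the hypothesis $s\le r$ in the odd case. The left-hand series is to be understood through the analytic continuation of $\Li_k$, just as in the proof of \eqref{eq.aujl7hb}, since $\alpha^{r+s}>1$ in general. More delicately, relation \eqref{eq.bernpoly1} requires the argument to satisfy $t\le 1$: for the negative arguments $-\alpha^{\pm m}$ this is automatic, but for the positive argument $\alpha^{s-r}$ that appears only in the odd case it forces $s-r\le 0$. Imposing $s\le r$ guarantees $\alpha^{s-r}\le 1$, so that \eqref{eq.bernpoly1} applies rather than \eqref{eq.bernpoly2}, which would otherwise replace $\tfrac{(s-r)\log\alpha}{2\pi i}$ by its negative. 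Getting the imaginary shift $i\pi$ in $\log(-\alpha^m)$ correct, and hence the $\tfrac{1}{2}$ in the Bernoulli argument, together with this parity-dependent choice between \eqref{eq.bernpoly1} and \eqref{eq.bernpoly2}, is the one place where genuine care is required; the rest is routine algebra.
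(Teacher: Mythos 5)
Your proof is correct and takes essentially the same route as the paper: the paper derives the identical four-polylogarithm decomposition (its equation \eqref{eq.ezhz8t1}, specialized at $z=-1$) by substituting $\alpha^s z$ and $\beta^s z$ into \eqref{eq.partfibo} and subtracting, rather than by expanding the Binet product directly, but the resulting intermediate identity is the same. The final step — applying \eqref{eq.bernpoly1} with $k$ even, using the branch value $\log(-\alpha^m)=m\log\alpha+i\pi$ to produce the $\tfrac{1}{2}$ shift, and requiring $s\le r$ so that the positive argument $\alpha^{s-r}\le 1$ in the odd case — matches the paper's reasoning exactly.
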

\begin{proof}
Writing $\alpha^sz$ for $z$ in \eqref{eq.partfibo}, we have
\[
\sum_{j = 1}^\infty  {\frac{{z^j }}{{j^k }}F_{rj} \alpha ^{sj} z^j }  = \frac{1}{{\sqrt 5 }}\left( {\Li_k (\alpha ^{s + r} z) - \Li_k (\beta ^r \alpha ^s z)} \right)\,,
\]
which, using \eqref{eq.we8ccor}, can be written as
\begin{equation}\label{eq.d48b4kn}
\frac{1}{2}\sum_{j = 1}^\infty  {\frac{{z^j }}{{j^k }}F_{rj} L_{sj} }  + \frac{{\sqrt 5 }}{2}\sum_{j = 1}^\infty  {\frac{{z^j }}{{j^k }}F_{rj} F_{sj} }  = \frac{1}{{\sqrt 5 }}\left( {\Li_k (\alpha ^{s + r} z) - \Li_k (\beta ^r \alpha ^s z)} \right)\,.
\end{equation}
Writing $\beta^sz$ for $z$ in \eqref{eq.partfibo} and making use of \eqref{eq.we8ccor}, we have
\begin{equation}\label{eq.qqg4lst}
\frac{1}{2}\sum_{j = 1}^\infty  {\frac{{z^j }}{{j^k }}F_{rj} L_{sj} }  - \frac{{\sqrt 5 }}{2}\sum_{j = 1}^\infty  {\frac{{z^j }}{{j^k }}F_{rj} F_{sj} }  = \frac{1}{{\sqrt 5 }}\left( {\Li_k (\alpha ^r \beta ^s z) - \Li_k (\beta ^{s + r} z)} \right)\,.
\end{equation}
Subtraction of \eqref{eq.qqg4lst} from \eqref{eq.d48b4kn} gives 
\begin{equation}\label{eq.ezhz8t1}
\begin{split}
\sum_{j = 1}^\infty  {\frac{{z^j }}{{j^k }}F_{rj} F_{sj} }  &= \frac{1}{5}\left( {\Li_k (\alpha ^{s + r} z) + \Li_k (\beta ^{s + r} z)} \right)\\
&\qquad - \frac{1}{5}\left( {\Li_k (( - 1)^r \alpha ^{s - r} z) + \Li_k (( - 1)^r \beta ^{s - r} z)} \right)\,,
\end{split}
\end{equation}
while their addition yields
\begin{equation}\label{eq.qii2vmo}
\begin{split}
\sum_{j = 1}^\infty  {\frac{{z^j }}{{j^k }}F_{rj} L_{sj} }  &= \frac{1}{{\sqrt 5 }}\left( {\Li_k (\alpha ^{s + r} z) - \Li_k (\beta ^{s + r} z)} \right)\\
&\qquad - \frac{1}{{\sqrt 5 }}\left( {\Li_k (( - 1)^r \alpha ^{s - r} z) - \Li_k (( - 1)^r \beta ^{s - r} z)} \right)\,.
\end{split}
\end{equation}
\end{proof}
Setting $z=-1$ in \eqref{eq.ezhz8t1} gives the identity of Theorem \ref{thm.nqsu925} in light of relation~\eqref{eq.bernpoly1}.

\medskip

We have the following example evaluations from Theorem \ref{thm.nqsu925}:
\begin{equation}
\sum_{j = 1}^\infty  {\frac{{( - 1)^{j - 1} }}{{j^4 }}F_{2j} F_{4j} }  = \frac{8}{{15}}\pi ^2 \log ^2 \alpha  + \frac{{32}}{3}\log ^4 \alpha\,,
\end{equation}

\begin{equation}
\sum_{j = 1}^\infty  {\frac{{( - 1)^{j - 1} }}{{j^6 }}F_{2j} F_{4j} }  = \frac{{14}}{{225}}\pi ^4 \log ^2 \alpha  + \frac{{16}}{9}\pi ^2 \log ^4 \alpha  + \frac{{2912}}{{225}}\log ^6 \alpha\,,
\end{equation}
\begin{corollary}\label{cor.hem6pmm}
Let $k$ be a non-negative even integer. Then,
\begin{equation}
\sum_{j = 1}^\infty  {\frac{{( - 1)^{j - 1} }}{{j^k }}F_{rj}^2 }  = \left\{ \begin{array}{l}
 \frac{{(2\pi i)^k }}{{k!}}\left( {\frac{1}{5}B_k \left( {\frac{1}{2} + \frac{{r\log \alpha }}{{\pi i}}} \right) - \frac{1}{5}(2^{1 - k}-1)B_k} \right)\,,\quad \mbox{$r$ even}\,, \\ 
\\
 \frac{{(2\pi i)^k }}{{k!}}\left( {\frac{1}{5}B_k \left( {\frac{1}{2} + \frac{{r\log \alpha }}{{\pi i}}} \right) - \frac{1}{5}B_k} \right)\,,\quad \mbox{$r$ odd}\,. \\ 
 \end{array} \right.
\end{equation}
\end{corollary}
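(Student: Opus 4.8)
The plan is to obtain Corollary~\ref{cor.hem6pmm} as the specialization $s=r$ of Theorem~\ref{thm.nqsu925}, since $F_{rj}^2=F_{rj}F_{rj}$. First I would verify that the hypotheses of the theorem are met when $s=r$: the integers $r$ and $s=r$ trivially have the same parity, and in the odd branch the requirement $s\le r$ reduces to $r\le r$, which holds. Hence both branches of Theorem~\ref{thm.nqsu925} apply with no further work.

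Next I would substitute $s=r$ into each branch and simplify the arguments of the Bernoulli polynomials. In both branches the sum $s+r$ becomes $2r$, so the first polynomial acquires the argument $\tfrac12+\tfrac{2r\log\alpha}{2\pi i}=\tfrac12+\tfrac{r\log\alpha}{\pi i}$, exactly as in the statement of the corollary. Meanwhile the difference $s-r$ becomes $0$, so the second term collapses: in the even-$r$ branch it becomes $\tfrac15 B_k\!\left(\tfrac12\right)$, while in the odd-$r$ branch it becomes $\tfrac15 B_k(0)=\tfrac15 B_k$, the last equality being the relation $B_k=B_k(0)$ noted in the introduction.

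The only ingredient beyond Theorem~\ref{thm.nqsu925} is the classical evaluation
\[
B_k\!\left(\tfrac12\right)=\left(2^{1-k}-1\right)B_k,
\]
which converts the even-$r$ branch into the form stated in the corollary. I would justify this from the generating function: on the one hand $\sum_{k\ge0}B_k(\tfrac12)\,\tfrac{z^k}{k!}=ze^{z/2}/(e^z-1)$, and on the other hand, using $e^z-1=(e^{z/2}-1)(e^{z/2}+1)$ one checks the elementary identity
\[
\frac{z}{e^{z/2}-1}-\frac{z}{e^z-1}=\frac{ze^{z/2}}{e^z-1}.
\]
Since $z/(e^{z/2}-1)=\sum_{k\ge0}2^{1-k}B_k\,z^k/k!$ and $z/(e^z-1)=\sum_{k\ge0}B_k\,z^k/k!$, matching coefficients of $z^k/k!$ on the two sides yields the stated relation. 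This step is entirely routine, so there is no real obstacle; the substance of the corollary is already contained in Theorem~\ref{thm.nqsu925}, and the work amounts to a clean specialization together with two standard Bernoulli evaluations.
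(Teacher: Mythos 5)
Your proposal is correct and is exactly the intended derivation: the paper states this corollary without proof as the specialization $s=r$ of Theorem~\ref{thm.nqsu925}, with the arguments simplifying via $\tfrac{2r\log\alpha}{2\pi i}=\tfrac{r\log\alpha}{\pi i}$ and the constant terms evaluated by $B_k(0)=B_k$ and $B_k\left(\tfrac12\right)=\left(2^{1-k}-1\right)B_k$. Your generating-function verification of the latter identity is a sound (if optional) addition.
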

Examples from Corollary \ref{cor.hem6pmm} include
\begin{equation}
\sum_{j = 1}^\infty  {\frac{{( - 1)^{j - 1} }}{{j^2 }}F_{_{2j} }^2 }  = \frac{8}{{5}}\log ^2 \alpha\,,
\end{equation}
\begin{equation}
\sum_{j = 1}^\infty  {\frac{{( - 1)^{j - 1} }}{{j^6 }}F_{_{4j} }^2 }  = \frac{4}{{225}}\log ^2 \alpha \left( {7\pi ^4  + 320\pi ^2 \log ^2 \alpha  + 4096\log ^4 \alpha } \right)\,,
\end{equation}
\begin{equation}
\sum_{j = 1}^\infty  {\frac{{( - 1)^{j - 1} }}{{j^6 }}F_j^2 }  = \frac{{\pi ^6 }}{{1200}} + \frac{{7\pi ^4 }}{{900}}\log ^2 \alpha  + \frac{{\pi ^2 }}{{45}}\log ^4 \alpha  + \frac{4}{{225}}\log ^6 \alpha\,,
\end{equation}
\begin{equation}
\sum_{j = 1}^\infty  {\frac{{( - 1)^{j - 1} }}{{j^6 }}F_{3j}^2 }  = \frac{{\pi ^6 }}{{1200}} + \frac{{7\pi ^4 }}{{900}}\log ^2 \alpha  + \frac{{9\pi ^2 }}{5}\log ^4 \alpha  + \frac{{324}}{{25}}\log ^6 \alpha\,.
\end{equation}
\begin{theorem}\label{eq.wnwp0c3}
Let $k$ be a non-negative odd integer. Let $r$ and $s$ be integers having the same parity. Then,
\begin{equation}
\sum_{j = 1}^\infty  {\frac{{( - 1)^{j - 1} }}{{j^k }}F_{rj} L_{sj} }  = \left\{ \begin{array}{l}
 \frac{{(2\pi i)^k }}{{k!}}\left( {\frac{1}{\sqrt 5}B_k \left( {\frac{1}{2} + \frac{{(s + r)\log \alpha }}{{2\pi i}}} \right) - \frac{1}{\sqrt 5}B_k \left( {\frac{1}{2} + \frac{{(s - r)\log \alpha }}{{2\pi i}}} \right)} \right)\,,\quad \mbox{$r$ even}\,, \\ 
\\
 \frac{{(2\pi i)^k }}{{k!}}\left( {\frac{1}{\sqrt 5}B_k \left( {\frac{1}{2} + \frac{{(s + r)\log \alpha }}{{2\pi i}}} \right) - \frac{1}{\sqrt 5}B_k \left( {\frac{{(s - r)\log \alpha }}{{2\pi i}}} \right)} \right)\,,\quad \mbox{$r$ odd, $s\le r$}\,. \\ 
 \end{array} \right.
\end{equation}
\end{theorem}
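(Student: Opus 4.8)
The plan is to start from identity \eqref{eq.qii2vmo}, already derived in the proof of Theorem~\ref{thm.nqsu925}, and specialise it to $z=-1$. This gives
\[
\sum_{j = 1}^\infty  {\frac{{( - 1)^j }}{{j^k }}F_{rj} L_{sj} }  = \frac{1}{{\sqrt 5 }}\left( {\Li_k ( - \alpha ^{s + r} ) - \Li_k ( - \beta ^{s + r} )} \right) - \frac{1}{{\sqrt 5 }}\left( {\Li_k (( - 1)^{r + 1} \alpha ^{s - r} ) - \Li_k (( - 1)^{r + 1} \beta ^{s - r} )} \right)\,,
\]
after which I would negate both sides to pass from $(-1)^j$ to $(-1)^{j-1}$. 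Since $r$ and $s$ share the same parity, both $s+r$ and $s-r$ are even, so $\beta^{s\pm r}=1/\alpha^{s\pm r}$; this is the crucial fact that makes each bracketed difference fit the reflection formula \eqref{eq.bernpoly1}.

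For the first bracket I would take $t=-\alpha^{s+r}$, so that $1/t=-\alpha^{-(s+r)}=-\beta^{s+r}$. Because $k$ is odd, the left side of \eqref{eq.bernpoly1} reads $\Li_k(t)-\Li_k(1/t)$, matching the first difference exactly; and since $t<0\le 1$ the formula applies. Writing $\log(-\alpha^{s+r})=(s+r)\log\alpha+i\pi$ on the principal branch converts the Bernoulli argument $\log t/(2\pi i)$ into $\tfrac12+(s+r)\log\alpha/(2\pi i)$, producing the term $B_k\!\left(\tfrac12+(s+r)\log\alpha/(2\pi i)\right)$ common to both cases of the theorem.

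The second bracket is where the stated case split originates, controlled by the sign $(-1)^{r+1}$. When $r$ is even this sign is $-1$, so the argument is again negative and the same negative-$t$ computation (with $s-r$ in place of $s+r$) yields $B_k\!\left(\tfrac12+(s-r)\log\alpha/(2\pi i)\right)$, giving the first branch. When $r$ is odd the sign is $+1$, so $t=\alpha^{s-r}$ is positive; here the restriction $s\le r$ guarantees $t=\alpha^{s-r}\le 1$, so that \eqref{eq.bernpoly1} rather than \eqref{eq.bernpoly2} is the applicable reflection formula, and now $\log t=(s-r)\log\alpha$ carries no $i\pi$ term, producing the shift-free polynomial $B_k\!\left((s-r)\log\alpha/(2\pi i)\right)$ of the second branch. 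Collecting the two brackets with their $\pm 1/\sqrt5$ prefactors and the overall sign reversal then gives the asserted identity.

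The main obstacle, and the reason for the hypothesis $s\le r$ in the odd case, is the branch and domain bookkeeping for the reflection formula: one must ensure the chosen $t$ lies on the half-line $t\le 1$ where \eqref{eq.bernpoly1} holds, and must track the $i\pi$ contribution to $\log t$ for negative $t$, which is exactly what is responsible for the $\tfrac12$ appearing in three of the four Bernoulli arguments but not in the fourth. If instead $s>r$ in the odd case, then $\alpha^{s-r}>1$ would force the use of \eqref{eq.bernpoly2}, changing the sign inside the last Bernoulli argument; restricting to $s\le r$ is precisely what preserves the clean form above.
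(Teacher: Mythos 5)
Your proposal is correct and is exactly the paper's approach: the paper's own proof is the one-line instruction to set $z=-1$ in \eqref{eq.qii2vmo} and apply \eqref{eq.bernpoly1}, which is precisely what you do. Your write-up simply supplies the details the paper leaves implicit — the parity argument giving $\beta^{s\pm r}=1/\alpha^{s\pm r}$, the branch bookkeeping $\log(-\alpha^{m})=m\log\alpha+i\pi$ that produces the $\tfrac12$ shifts, and the role of $s\le r$ in keeping $\alpha^{s-r}\le 1$ so that \eqref{eq.bernpoly1} rather than \eqref{eq.bernpoly2} applies in the odd case.
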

\begin{proof}
Set $z=-1$ in identity~\eqref{eq.qii2vmo} and make use of relation~\eqref{eq.bernpoly1}.
\end{proof}
Examples from Theorem \eqref{eq.wnwp0c3} include
\begin{equation}
\sum_{j = 1}^\infty  {\frac{{( - 1)^{j - 1} }}{j}F_{2j} L_{4j} }  = \frac{4}{{\sqrt 5 }}\log \alpha\,,
\end{equation}
\begin{equation}
\sum_{j = 1}^\infty  {\frac{{( - 1)^{j - 1} }}{{j^3 }}F_{2j} L_{4j} }  = \frac{2}{{3\sqrt 5 }}(\pi ^2  + 52\log ^2 \alpha )\log \alpha\,.
\end{equation}
\begin{theorem}\label{eq.zyd5el6}
Let $k$ be a non-negative even integer. Let $r$ and $s$ be integers having the same parity. Then,
\begin{equation}
\sum_{j = 1}^\infty  {\frac{{( - 1)^{j - 1} }}{{j^k }}L_{rj} L_{sj} }  = \left\{ \begin{array}{l}
 \frac{{(2\pi i)^k }}{{k!}}\left( {B_k \left( {\frac{1}{2} + \frac{{(s + r)\log \alpha }}{{2\pi i}}} \right) + B_k \left( {\frac{1}{2} + \frac{{(s - r)\log \alpha }}{{2\pi i}}} \right)} \right)\,,\quad \mbox{$r$ even}\,, \\ 
\\
 \frac{{(2\pi i)^k }}{{k!}}\left( {B_k \left( {\frac{1}{2} + \frac{{(s + r)\log \alpha }}{{2\pi i}}} \right) + B_k \left( {\frac{{(s - r)\log \alpha }}{{2\pi i}}} \right)} \right)\,,\quad \mbox{$r$ odd, $s\le r$}\,. \\ 
 \end{array} \right.
\end{equation}
\end{theorem}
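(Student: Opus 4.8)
The plan is to follow the same template as the proof of Theorem~\ref{thm.nqsu925}, but starting from the Lucas particular case \eqref{eq.partluca} rather than the Fibonacci one. First I would write $\alpha^s z$ for $z$ in \eqref{eq.partluca} and resolve $\alpha^{sj}$ through \eqref{eq.we8ccor}, obtaining
\[
\frac{1}{2}\sum_{j=1}^\infty \frac{z^j}{j^k} L_{rj} L_{sj} + \frac{\sqrt5}{2}\sum_{j=1}^\infty \frac{z^j}{j^k} L_{rj} F_{sj} = \Li_k(\alpha^{s+r}z) + \Li_k(\beta^r\alpha^s z)\,.
\]
Writing $\beta^s z$ for $z$ and applying \eqref{eq.we8ccor} again yields the companion relation, with the sign of the $L_{rj}F_{sj}$ term reversed and with right-hand side $\Li_k(\alpha^r\beta^s z) + \Li_k(\beta^{s+r} z)$.

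Adding these two relations cancels the mixed $L_{rj}F_{sj}$ sum and produces the generating-function identity
\[
\sum_{j=1}^\infty \frac{z^j}{j^k} L_{rj} L_{sj} = \Li_k(\alpha^{s+r}z) + \Li_k(\beta^{s+r}z) + \Li_k(\beta^r\alpha^s z) + \Li_k(\alpha^r\beta^s z)\,,
\]
which is the exact Lucas analogue of \eqref{eq.ezhz8t1} (with a plus sign in place of the minus and without the factor $1/5$). Since $r$ and $s$ share a parity, I would use $\alpha\beta=-1$ to rewrite $\beta^r\alpha^s = (-1)^r\alpha^{s-r}$ and $\alpha^r\beta^s = (-1)^r\beta^{s-r}$, so that the last two polylogarithms become $\Li_k((-1)^r\alpha^{s-r}z)$ and $\Li_k((-1)^r\beta^{s-r}z)$.

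With the generating-function identity in hand, the result follows by setting $z=-1$ and invoking \eqref{eq.bernpoly1}, in which $(-1)^k=1$ because $k$ is even; the leading minus sign there cancels against the $(-1)^j=-(-1)^{j-1}$ coming from $z^j=(-1)^j$ on the left. The only real obstacle is the branch bookkeeping in \eqref{eq.bernpoly1}, and this is precisely what splits the statement into two cases. When $r$ is even, $(-1)^r=1$ and at $z=-1$ the four arguments form the two reciprocal pairs $\{-\alpha^{s+r},-\beta^{s+r}\}$ and $\{-\alpha^{s-r},-\beta^{s-r}\}$, each member being negative; the factor $-1$ contributes $\log(-1)=i\pi$, so both Bernoulli arguments pick up the shift $\tfrac12$, giving the first line. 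When $r$ is odd, the factor $(-1)^r=-1$ flips the sign of the second pair, so at $z=-1$ its arguments are the positive reciprocals $\alpha^{s-r}$ and $\alpha^{r-s}$; the hypothesis $s\le r$ guarantees $\alpha^{s-r}\le 1$, so \eqref{eq.bernpoly1} applies with a purely real logarithm and no $\tfrac12$ shift appears in the second Bernoulli term, producing the second line. Collecting each reciprocal pair through \eqref{eq.bernpoly1} then completes the proof.
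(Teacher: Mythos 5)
Your proposal is correct and is essentially the paper's own proof: the paper likewise substitutes $\alpha^s z$ and $\beta^s z$ into \eqref{eq.partluca}, resolves the powers via \eqref{eq.we8ccor}, adds to cancel the mixed $L_{rj}F_{sj}$ sum, and then sets $z=-1$ and applies \eqref{eq.bernpoly1}, exactly mirroring the proof of Theorem~\ref{thm.nqsu925}. Your explicit handling of the parity cases and of the $s\le r$ condition simply spells out details the paper leaves implicit by reference to that earlier proof.
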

\begin{proof}
The proof is similar to that of Theorem~\ref{thm.nqsu925}. We write $\alpha^sz$ and $\beta^sz$ for $z$, in turn, in identity~\eqref{eq.partluca} and make use of identity~\eqref{eq.we8ccor}.
\end{proof}
We have the following examples from Theorem \eqref{eq.zyd5el6}:
\begin{equation}
\sum_{j = 1}^\infty  {\frac{{( - 1)^{j - 1} }}{j^2}L_{2j} L_{4j} }  = \frac{{\pi ^2 }}{3} + 20\log ^2 \alpha\,,
\end{equation}
\begin{equation}
\sum_{j = 1}^\infty  {\frac{{( - 1)^{j - 1} }}{{j^4 }}L_{2j} L_{4j} }  = \frac{{7\pi ^4 }}{{180}} + \frac{{10\pi ^2 }}{3}\log ^2 \alpha  + \frac{{164}}{3}\log ^4 \alpha\,,
\end{equation}
\begin{corollary}\label{cor.s1q2vhc}
Let $k$ be a non-negative even integer. Then,
\begin{equation}
\sum_{j = 1}^\infty  {\frac{{( - 1)^{j - 1} }}{{j^k }}L_{rj}^2 }  = \left\{ \begin{array}{l}
 \frac{{(2\pi i)^k }}{{k!}}\left( {B_k \left( {\frac{1}{2} + \frac{{r\log \alpha }}{{\pi i}}} \right) + (2^{1 - k}-1)B_k} \right)\,,\quad \mbox{$r$ even}\,, \\ 
\\
 \frac{{(2\pi i)^k }}{{k!}}\left( {B_k \left( {\frac{1}{2} + \frac{{r\log \alpha }}{{\pi i}}} \right) + B_k} \right)\,,\quad \mbox{$r$ odd}\,. \\ 
 \end{array} \right.
\end{equation}
\end{corollary}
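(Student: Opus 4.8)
The plan is to obtain this corollary directly as the diagonal case $s = r$ of Theorem~\ref{eq.zyd5el6}, which already supplies the general evaluation of $\sum_{j=1}^\infty \frac{(-1)^{j-1}}{j^k} L_{rj} L_{sj}$ whenever $r$ and $s$ share the same parity. Taking $s = r$ trivially satisfies the equal-parity hypothesis, and in the odd case the restriction $s \le r$ holds with equality, so both branches of the theorem apply without further argument.

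First I would substitute $s = r$ into the two arguments appearing in Theorem~\ref{eq.zyd5el6}. The ``sum'' argument becomes $\frac{(s+r)\log\alpha}{2\pi i} = \frac{2r\log\alpha}{2\pi i} = \frac{r\log\alpha}{\pi i}$, so the first Bernoulli term collapses to $B_k\!\left(\frac{1}{2} + \frac{r\log\alpha}{\pi i}\right)$ in both the even and the odd branch, matching the leading term of the claimed formula. The ``difference'' argument becomes $s - r = 0$, and the whole task reduces to identifying the resulting second Bernoulli term in each parity case.

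For $r$ odd this is immediate: the second term is $B_k\!\left(\frac{(s-r)\log\alpha}{2\pi i}\right) = B_k(0) = B_k$, giving the odd branch. For $r$ even the second term is instead $B_k\!\left(\frac{1}{2} + \frac{(s-r)\log\alpha}{2\pi i}\right) = B_k(1/2)$, and the only extra ingredient needed is the classical special value
\[
B_k\!\left(\tfrac{1}{2}\right) = (2^{1-k} - 1)\,B_k,
\]
which follows from the multiplication theorem $B_k(2x) = 2^{k-1}\bigl(B_k(x) + B_k(x + 1/2)\bigr)$ evaluated at $x = 0$. Inserting this value produces the even branch.

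I do not expect any genuine obstacle here: the substantive work—the reduction of the weighted double-Lucas series to Bernoulli polynomials through the polylogarithm relation~\eqref{eq.bernpoly1}—has already been carried out in Theorem~\ref{eq.zyd5el6}. The corollary is purely a specialization of that theorem combined with the single standard evaluation $B_k(1/2) = (2^{1-k}-1)B_k$, so the proof should be just a few lines.
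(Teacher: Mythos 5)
Your proposal is correct and is essentially the paper's own argument: the paper states Corollary \ref{cor.s1q2vhc} without a separate proof, precisely because it is the $s=r$ specialization of Theorem \ref{eq.zyd5el6}, which is exactly what you carry out. Your identification of the second Bernoulli term via $B_k(0)=B_k$ and the standard value $B_k\left(\tfrac{1}{2}\right)=(2^{1-k}-1)B_k$ is the intended (and correct) completion of that specialization.
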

Examples from Corollary \ref{cor.s1q2vhc} include
\begin{equation}
\sum_{j = 1}^\infty  {\frac{{( - 1)^{j - 1} }}{{j^2 }}L_{2j}^2 }  = \frac{{\pi ^2 }}{3} + 8\log ^2 \alpha\,, 
\end{equation}
\begin{equation}
\sum_{j = 1}^\infty  {\frac{{( - 1)^{j - 1} }}{{j^4 }}L_{2j}^2 }  = \frac{{7\pi ^4 }}{{180}} + \frac{4}{3}\pi ^2 \log ^2 \alpha  + \frac{{32}}{3}\log ^4 \alpha\,, 
\end{equation}
\begin{equation}
\sum_{j = 1}^\infty  {\frac{{( - 1)^{j - 1} }}{{j^6 }}L_j^2 }  =  - \frac{{\pi ^6 }}{{15120}} + \frac{7}{{180}}\pi ^4 \log ^2 \alpha  + \frac{1}{9}\pi ^2 \log ^4 \alpha  + \frac{4}{{45}}\log ^6 \alpha\,. 
\end{equation}
\section{Miscellaneous results}\label{sec.others}
\begin{theorem}
Let $r$ and $s$ be integers. Let $z$ be a real or complex variable for which $|z|<\alpha^{-r}$. Then,
\begin{equation}\label{eq.ruu9ohg}
\sum_{j = 1}^\infty  {z^j F_{rj + s} }  = \frac{{F_{r + s} z - ( - 1)^r z^2 F_s }}{{1 - L_r z + ( - 1)^r z^2 }}\,,
\end{equation}

\begin{equation}\label{eq.g74w6m7}
\sum_{j = 1}^\infty  {z^j L_{rj + s} }  = \frac{{L_{r + s} z - ( - 1)^r z^2 L_s }}{{1 - L_r z + ( - 1)^r z^2 }}\,.
\end{equation}

\end{theorem}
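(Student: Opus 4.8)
The plan is to recognize this theorem as the $k=0$ specialization of Theorem~\ref{thm.main}: since $\Li_0(w)=w/(1-w)$, the polylogarithms appearing there collapse to rational functions and the sums should reduce to the claimed closed forms. Concretely, I would start from the intermediate identities \eqref{eq.z444ygq} and \eqref{eq.x42cf1i}, which already isolate the Fibonacci and Lucas sums in terms of $\Li_k(\alpha^r z)$ and $\Li_k(\beta^r z)$, and simply put $k=0$. For a self-contained argument I would instead invoke the Binet formulas directly, writing
\[
\sum_{j=1}^\infty z^j F_{rj+s} = \frac{1}{\sqrt 5}\Bigl(\alpha^s\sum_{j=1}^\infty (\alpha^r z)^j - \beta^s\sum_{j=1}^\infty (\beta^r z)^j\Bigr),
\]
with the analogous expression carrying a plus sign and $\alpha^s,\beta^s$ coefficients for the Lucas sum. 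The hypothesis $|z|<\alpha^{-r}$ ensures $|\alpha^r z|<1$ and $|\beta^r z|<1$, so both geometric series converge, each equal to $w/(1-w)$ with $w=\alpha^r z$ and $w=\beta^r z$ respectively.

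Next I would combine the two resulting fractions over the common denominator $(1-\alpha^r z)(1-\beta^r z)$. Expanding this product and applying $\alpha^r+\beta^r=L_r$ together with $(\alpha\beta)^r=(-1)^r$ gives the denominator $1-L_r z+(-1)^r z^2$ shared by \eqref{eq.ruu9ohg} and \eqref{eq.g74w6m7}. The numerators then amount to collecting the coefficients of $z$ and $z^2$: for the Fibonacci sum the linear coefficient is $(\alpha^{r+s}-\beta^{r+s})/\sqrt 5=F_{r+s}$, while the quadratic coefficient, arising from the cross terms $\alpha^{r+s}\beta^r=(-1)^r\alpha^s$ and $\beta^{r+s}\alpha^r=(-1)^r\beta^s$, collapses to $-(-1)^r(\alpha^s-\beta^s)/\sqrt 5=-(-1)^r F_s$, yielding \eqref{eq.ruu9ohg}. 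The Lucas case runs identically, with $\alpha^{r+s}+\beta^{r+s}=L_{r+s}$ and $\alpha^s+\beta^s=L_s$ in place of their Fibonacci analogues, producing \eqref{eq.g74w6m7}.

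There is no substantial obstacle here: the entire content is two geometric summations followed by algebraic bookkeeping. The one point demanding care is the factor $(\alpha\beta)^r=(-1)^r$, which enters every cross term of the common denominator and fixes the sign of the $z^2$ coefficient in each numerator, so a slip there would corrupt the denominator and the numerator at once. I would guard against this by checking a small case, say $r=s=1$, where the formula predicts $\sum_{j=1}^\infty z^j F_{j+1}=(z+z^2)/(1-z-z^2)$, in agreement with the standard Fibonacci generating function.
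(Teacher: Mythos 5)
Your proof is correct, and it takes a somewhat different algebraic route from the paper. Both arguments recognize the theorem as the $k=0$ case of the machinery behind Theorem~\ref{thm.main}, via $\Li_0(w)=w/(1-w)$. The paper, however, starts from the \emph{resolved} form of Theorem~\ref{thm.main} (coefficients $F_s/2$ and $L_s/(2\sqrt 5)$), computes $\Li_0(\alpha^r z)\pm\Li_0(\beta^r z)$ as rational functions over the denominator $1-L_rz+(-1)^rz^2$, and then must invoke the Vajda addition identities $F_rL_s+F_sL_r=2F_{r+s}$ and $L_rL_s+5F_rF_s=2L_{r+s}$ to reassemble the numerators into $F_{r+s}$ and $L_{r+s}$. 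You instead work from the unresolved intermediate identities \eqref{eq.z444ygq} and \eqref{eq.x42cf1i} at $k=0$ (equivalently, straight from Binet plus two geometric series), keeping the coefficients as $\alpha^s$ and $\beta^s$; then the numerator terms $\alpha^{r+s}-\beta^{r+s}$ and $(\alpha\beta)^r(\alpha^s-\beta^s)$ collapse to $\sqrt 5\,F_{r+s}$ and $(-1)^r\sqrt 5\,F_s$ by Binet alone, with no addition formulas needed (and likewise in the Lucas case). What each buys: your route is more self-contained and needs fewer auxiliary lemmas, while the paper's route reuses its main theorem verbatim and delegates the final simplification to standard Fibonacci--Lucas identities. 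Your numerical check against the classical generating function at $r=s=1$ is a sensible safeguard. One shared caveat, which is a defect of the hypothesis rather than of your argument: the claim that $|z|<\alpha^{-r}$ forces $|\beta^rz|<1$ holds for $r\ge 0$, but for $r<0$ the dominant root is $\beta^r$ (of modulus $\alpha^{-r}>1$), so strictly one would need $|z|<\alpha^{r}$ there; the paper's own statement and proof gloss over this in exactly the same way.
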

\begin{proof}
Use
\[
\Li_0 (\alpha ^r z) + \Li_0 (\beta ^r z) = \frac{{\alpha ^r z}}{{1 - \alpha ^r z}} + \frac{{\beta ^r z}}{{1 - \beta ^r z}} = \frac{{L_r z - ( - 1)^r 2z^2 }}{{1 - L_r z + ( - 1)^r z^2 }}
\]
and
\[
\Li_0 (\alpha ^r z) - \Li_0 (\beta ^r z) = \frac{{\alpha ^r z}}{{1 - \alpha ^r z}} - \frac{{\beta ^r z}}{{1 - \beta ^r z}} = \frac{{F_r z\sqrt 5 }}{{1 - L_r z + ( - 1)^r z^2 }}
\]
in the identities of Theorem \ref{thm.main}, obtaining
\[
\sum_{j = 1}^\infty  {z^j F_{rj + s} }  = \frac{{F_s }}{2}\left( {\frac{{L_r z - ( - 1)^r 2z^2 }}{{1 - L_r z + ( - 1)^r z^2 }}} \right) + \frac{{L_s }}{{2\sqrt 5 }}\left( {\frac{{F_r z\sqrt 5 }}{{1 - L_r z + ( - 1)^r z^2 }}} \right)
\]
and
\[
\sum_{j = 1}^\infty  {z^j L_{rj + s} }  = \frac{{L_s }}{2}\left( {\frac{{L_r z - ( - 1)^r 2z^2 }}{{1 - L_r z + ( - 1)^r z^2 }}} \right) + \frac{{F_s \sqrt 5 }}{2}\left( {\frac{{F_r z\sqrt 5 }}{{1 - L_r z + ( - 1)^r z^2 }}} \right)\,,
\]
and hence identities \eqref{eq.ruu9ohg} and \eqref{eq.g74w6m7}. We used
\[
F_r L_s  + F_s L_r  = 2F_{r + s} \quad\mbox{Vajda~\cite[(16a)]{vajda}}\,,
\]
and
\[
L_r L_s  + 5F_r F_s  = 2L_{r + s},\quad\mbox{Vajda~\cite[(17a)+(17b)]{vajda}}\,. 
\]
\end{proof}
Identities \eqref{eq.ruu9ohg} and \eqref{eq.g74w6m7} are the generating functions of Fibonacci numbers and Lucas numbers with indices in arithmetic progression. To make this obvious, divide both sides of each identity by $z$, shift the index of summation by writing $j+1$ for $j$ and adjust the parameter $s$ by writing $s-r$ for $s$, thereby obtaining
\begin{equation}
\sum\limits_{j = 0}^\infty  {z^j F_{rj + s} }  = \frac{{F_s  - ( - 1)^r zF_{s - r} }}{{1 - L_r z + ( - 1)^r z^2 }}
\end{equation}
and
\begin{equation}
\sum\limits_{j = 0}^\infty  {z^j L_{rj + s} }  = \frac{{L_s  - ( - 1)^r zL_{s - r} }}{{1 - L_r z + ( - 1)^r z^2 }}\,.
\end{equation}
\begin{theorem}\label{thm.log}
Let $r$ and $s$ be integers. Let $z$ be a real or complex variable such that $|z|<\alpha^{-r}$. Then,
\begin{equation}
\sum_{j = 1}^\infty  {\frac{{z^{j} }}{j}F_{rj + s} }  =   -\frac{{F_s }}{2}\log \left( {1 - L_r z + ( - 1)^r z^2 } \right) - \frac{{L_s }}{{2\sqrt 5 }}\log \frac{{1 - \alpha ^r z}}{{1 - \beta ^r z}}\,,
\end{equation}

\begin{equation}
\sum_{j = 1}^\infty  {\frac{{z^{j} }}{j}L_{rj + s} }  =   -\frac{{L_s }}{2}\log \left( {1 - L_r z + ( - 1)^r z^2 } \right) - \frac{{F_s \sqrt 5 }}{2}\log \frac{{1 - \alpha ^r z}}{{1 - \beta ^r z}}\,.
\end{equation}
\end{theorem}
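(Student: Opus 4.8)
The plan is to read these two identities as the $k=1$ specialization of Theorem~\ref{thm.main} and then replace each polylogarithm of order one by an ordinary logarithm. Recall from the introduction that $\Li_1(w)=-\log(1-w)$, which is exactly the statement that the defining series $\sum_{j\ge1}w^j/j$ sums to $-\log(1-w)$ on the principal branch. First I would set $k=1$ in identities~\eqref{eq.h9ujlfl} and~\eqref{eq.nr41hks}, which turns the left-hand sides into precisely the two series we wish to evaluate and expresses them as
\[
\frac{F_s}{2}\bigl(\Li_1(\alpha^r z)+\Li_1(\beta^r z)\bigr)+\frac{L_s}{2\sqrt5}\bigl(\Li_1(\alpha^r z)-\Li_1(\beta^r z)\bigr)
\]
for the Fibonacci series, and the analogous combination with $L_s/2$ and $F_s\sqrt5/2$ for the Lucas series.

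Next I would combine the logarithms. Writing $\Li_1(\alpha^r z)=-\log(1-\alpha^r z)$ and $\Li_1(\beta^r z)=-\log(1-\beta^r z)$, the symmetric combination becomes a single logarithm of a product,
\[
\Li_1(\alpha^r z)+\Li_1(\beta^r z)=-\log\bigl((1-\alpha^r z)(1-\beta^r z)\bigr),
\]
while the antisymmetric combination becomes a logarithm of a quotient,
\[
\Li_1(\alpha^r z)-\Li_1(\beta^r z)=-\log\frac{1-\alpha^r z}{1-\beta^r z}.
\]
The one genuine computation is to expand the product: since $\alpha^r+\beta^r=L_r$ and $(\alpha\beta)^r=(-1)^r$, one has $(1-\alpha^r z)(1-\beta^r z)=1-L_r z+(-1)^r z^2$. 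Substituting these two simplifications back into the expressions from the previous step reproduces the stated right-hand sides verbatim, and the same substitution in the Lucas combination gives the second identity.

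I do not expect a real obstacle here, as the argument is entirely mechanical; the only point deserving a remark is the validity of the branch choice and convergence. The hypothesis $|z|<\alpha^{-r}$ guarantees $|\alpha^r z|<1$ (and a fortiori $|\beta^r z|=\alpha^{-r}|z|<1$), so the series defining $\Li_1(\alpha^r z)$ and $\Li_1(\beta^r z)$ converge and coincide with the principal-branch logarithms used above; this is exactly what legitimizes passing from the polylogarithmic form of Theorem~\ref{thm.main} to the closed logarithmic form. Accordingly the proof reduces to citing Theorem~\ref{thm.main} with $k=1$, applying $\Li_1(w)=-\log(1-w)$, and simplifying.
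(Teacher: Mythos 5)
Your proposal is correct and follows essentially the same route as the paper: specialize Theorem~\ref{thm.main} to $k=1$, replace $\Li_1(w)$ by $-\log(1-w)$, and combine the logarithms via $(1-\alpha^r z)(1-\beta^r z)=1-L_r z+(-1)^r z^2$. In fact your write-up silently corrects a typo in the paper's proof (which prints $\Li_2(\beta^r z)$ where $\Li_1(\beta^r z)$ is meant) and adds the convergence remark the paper leaves implicit.
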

\begin{proof}
Use
\[
\Li_1 (\alpha ^r z) + \Li_2 (\beta ^r z) =  - \log \left( {1 - L_r z + ( - 1)^r z^2 } \right)
\]
and
\[
\Li_1 (\alpha ^r z) - \Li_2 (\beta ^r z) = \log \left( {\frac{{1 - \beta ^r z}}{{1 - \alpha ^r z}}} \right)
\]
in the identities of Theorem~\ref{thm.main}. 
\end{proof}
\begin{corollary}
Let $r$ and $s$ be integers. Let $x$ and $z$ be real variables such that $|z|<1$. Then,
\begin{equation}
\begin{split}
\sum_{j = 1}^\infty  {\frac{{z^j \cos jx}}{j}F_{rj + s} }  &=  - \frac{{F_s }}{4}\log \left( {z^4  - ( - 1)^r 2L_r z^3 \cos x + (L_{2r}  + ( - 1)^r 4\cos ^2 x)z^2  - 2L_r z\cos x + 1} \right)\\
&\qquad+ \frac{{L_s }}{{4\sqrt 5 }}\log \frac{{\beta ^{2r} z^2  - 2\beta ^r z\cos x + 1}}{{\alpha ^{2r} z^2  - 2\alpha ^r z\cos x + 1}}\,,
\end{split}
\end{equation}

\begin{equation}
\begin{split}
\sum_{j = 1}^\infty  {\frac{{z^j \cos jx}}{j}L_{rj + s} }  &=  - \frac{{L_s }}{4}\log \left( {z^4  - ( - 1)^r 2L_r z^3 \cos x + (L_{2r}  + ( - 1)^r 4\cos ^2 x)z^2  - 2L_r z\cos x + 1} \right)\\
&\qquad+ \frac{{F_s\sqrt 5 }}{{4}}\log \frac{{\beta ^{2r} z^2  - 2\beta ^r z\cos x + 1}}{{\alpha ^{2r} z^2  - 2\alpha ^r z\cos x + 1}}\,,
\end{split}
\end{equation}

\begin{equation}
\begin{split}
\sum_{j = 1}^\infty  {\frac{{z^j \sin jx}}{j}F_{rj + s} }  &= \frac{{F_s }}{2}\tan ^{ - 1} \frac{{zL_r \sin x - ( - 1)^r z^2 \sin 2x}}{{1 - zL_r \cos x + ( - 1)^r z^2 \cos 2x}}\\
&\qquad+ \frac{{L_s }}{{2\sqrt 5 }}\tan ^{ - 1} \frac{{zF_r \sqrt 5 \sin x}}{{1 - L_r z\cos x + ( - 1)^r z^2 }}\,,
\end{split}
\end{equation}

\begin{equation}
\begin{split}
\sum_{j = 1}^\infty  {\frac{{z^j \sin jx}}{j}L_{rj + s} }  &= \frac{{L_s }}{2}\tan ^{ - 1} \frac{{zL_r \sin x - ( - 1)^r z^2 \sin 2x}}{{1 - zL_r \cos x + ( - 1)^r z^2 \cos 2x}}\\
&\qquad+ \frac{{F_s\sqrt 5 }}{{2}}\tan ^{ - 1} \frac{{zF_r \sqrt 5 \sin x}}{{1 - L_r z\cos x + ( - 1)^r z^2 }}\,.
\end{split}
\end{equation}
\end{corollary}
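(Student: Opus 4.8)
The plan is to obtain all four identities simultaneously by substituting the complex value $ze^{ix}$ for the variable in each identity of Theorem~\ref{thm.log}, where now $z$ and $x$ are real and $|z|$ is small enough that $|ze^{ix}|=|z|$ lies in the disc $|z|<\alpha^{-r}$ on which Theorem~\ref{thm.log} applies. Since $(ze^{ix})^j=z^j(\cos jx+i\sin jx)$, each left-hand side splits as
\begin{equation*}
\sum_{j=1}^\infty\frac{(ze^{ix})^j}{j}F_{rj+s}=\sum_{j=1}^\infty\frac{z^j\cos jx}{j}F_{rj+s}+i\sum_{j=1}^\infty\frac{z^j\sin jx}{j}F_{rj+s},
\end{equation*}
and likewise with $L_{rj+s}$. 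Thus the two cosine identities are the real parts and the two sine identities the imaginary parts of the complexified identities of Theorem~\ref{thm.log}. It remains to extract $\Re$ and $\Im$ of the right-hand sides, for which I would write $\log w=\log|w|+i\arg w$ and exploit the factorization $1-L_r w+(-1)^rw^2=(1-\alpha^r w)(1-\beta^r w)$, valid because $\alpha^r+\beta^r=L_r$ and $\alpha^r\beta^r=(-1)^r$.

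For the cosine identities I would compute moduli. With $w=ze^{ix}$ one has
\begin{equation*}
|1-\alpha^r ze^{ix}|^2=(1-\alpha^r z\cos x)^2+(\alpha^r z\sin x)^2=\alpha^{2r}z^2-2\alpha^r z\cos x+1,
\end{equation*}
and the analogue with $\beta$. Since $\Re\log w=\tfrac12\log|w|^2$, the logarithm $\log(1-L_r w+(-1)^rw^2)=\log(1-\alpha^r w)+\log(1-\beta^r w)$ contributes the product of these two moduli; expanding that product (using $\alpha^{2r}\beta^{2r}=1$ and $\alpha^{2r}+\beta^{2r}=L_{2r}$) gives exactly the quartic $z^4-(-1)^r2L_rz^3\cos x+(L_{2r}+(-1)^r4\cos^2x)z^2-2L_rz\cos x+1$. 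The logarithm $\log\frac{1-\alpha^r w}{1-\beta^r w}$ contributes, through its real part, the quotient of the two moduli; after halving the coefficients this reproduces both cosine formulas, the apparent sign flip being just $\log\frac{\beta\cdots}{\alpha\cdots}=-\log\frac{\alpha\cdots}{\beta\cdots}$.

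For the sine identities I would take imaginary parts, i.e.\ arguments. From $\Im\log(1-\alpha^r ze^{ix})=\tan^{-1}\!\bigl(\tfrac{-\alpha^r z\sin x}{1-\alpha^r z\cos x}\bigr)$ and its $\beta$-analogue, the difference appearing in the $L_s/\sqrt5$ term collapses neatly under $\tan^{-1}A-\tan^{-1}B=\tan^{-1}\frac{A-B}{1+AB}$: the cross terms cancel because $\alpha^r\beta^r=(-1)^r$, leaving $A-B=-z\sin x\,(\alpha^r-\beta^r)/D$ and $1+AB=(1-L_rz\cos x+(-1)^rz^2)/D$ with common denominator $D=(1-\alpha^rz\cos x)(1-\beta^rz\cos x)$, so the quotient is $-zF_r\sqrt5\sin x/(1-L_rz\cos x+(-1)^rz^2)$ after using $\alpha^r-\beta^r=F_r\sqrt5$. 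The single logarithm $\log(1-L_rw+(-1)^rw^2)$ contributes $\tan^{-1}$ of the ratio of imaginary to real part of $1-L_rze^{ix}+(-1)^rz^2e^{2ix}$, which is the first arctangent in the sine formulas. Collecting terms, and using $\tan^{-1}(-u)=-\tan^{-1}u$ to absorb signs, yields the sine identities for $F$ and, with coefficients $L_s$ and $F_s\sqrt5$, for $L$.

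The main obstacle is not the algebra but the branch bookkeeping: the identity $\Im\log w=\tan^{-1}(\Im w/\Re w)$ and the arctangent addition law hold only on the principal branch, i.e.\ when the relevant real parts stay positive and the arctangent arguments stay in range. For $|z|$ small each of $1-\alpha^r ze^{ix}$, $1-\beta^r ze^{ix}$ and $1-L_rze^{ix}+(-1)^rz^2e^{2ix}$ is close to $1$, so all real parts are positive and the principal branch applies directly; the identities then extend to the full stated domain by analytic continuation in $z$, the two sides being analytic wherever the defining series converge.
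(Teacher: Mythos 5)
Your proof is correct and takes essentially the same route as the paper: the paper likewise writes $ze^{ix}$ for $z$ in Theorem~\ref{thm.main} with $k=1$ (which is just Theorem~\ref{thm.log} in polylogarithm notation) and takes real and imaginary parts, quoting the standard formulas for $\Re \Li_1 (ze^{ix})$ and $\Im \Li_1 (ze^{ix})$. Your write-up simply carries out explicitly the modulus/argument algebra (the quartic expansion via $\alpha^r\beta^r=(-1)^r$, $\alpha^{2r}+\beta^{2r}=L_{2r}$, and the arctangent subtraction) that the paper leaves implicit.
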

\begin{proof}
Write $ze^{ix}$ for $z$ in the identities of Theorem \ref{thm.main}, with $k=1$, and take real and imaginary parts, noting that:
\begin{equation}
\Re \Li_1 (ze^{ix} ) =  - \frac{1}{2}\log (1 - 2z\cos x + z^2 )
\end{equation}
and
\begin{equation}
\Im \Li_1 (ze^{ix} ) = \tan ^{ - 1} \left( {\frac{{z\sin x}}{{1 - z\cos x}}} \right)\,.
\end{equation}
\end{proof}
\begin{theorem}\label{thm.xzk5qh8}
Let $s$ be an integer. Then,
\begin{equation}\label{eq.jh3oe13}
\sum_{j = 1}^\infty  {\frac{{( - 1)^{j-1} }}{{j^2 }}F_{j + s} }  = F_s \log ^2 \alpha  + \frac{{\pi ^2 }}{{50}}L_s \sqrt 5\,,
\end{equation}

\begin{equation}\label{eq.r20gb7l}
\sum_{j = 1}^\infty  {\frac{{( - 1)^{j-1} }}{{j^2 }}L_{j + s} }  = L_s \log ^2 \alpha  + \frac{{\pi ^2 \sqrt 5 }}{{10}}F_s\,.
\end{equation}
\end{theorem}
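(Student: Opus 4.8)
The plan is to specialize Theorem~\ref{thm.main} to $r=1$, $k=2$ and $z=-1$, and then to insert the classical closed-form values of the dilogarithm at golden-ratio arguments. Writing $(-1)^{j-1}=-(-1)^j$, the two target sums are the negatives of the $z=-1$ instances of \eqref{eq.h9ujlfl} and \eqref{eq.nr41hks} with $r=1$, $k=2$. Since $\alpha(-1)=-\alpha$ and $\beta(-1)=-\beta=1/\alpha$, everything reduces to evaluating the two combinations
\[
\Li_2(-\alpha)+\Li_2(1/\alpha)\qquad\text{and}\qquad \Li_2(-\alpha)-\Li_2(1/\alpha).
\]
As elsewhere in the paper (e.g.\ \eqref{eq.aujl7hb}--\eqref{eq.cq80nxw}, where $F_{rj}$, $L_{rj}$ also grow like $\alpha^{rj}$), the left-hand series are read through the analytic continuation of the polylogarithm, the algebraic identity of Theorem~\ref{thm.main} being continued past its radius of convergence $|z|<\alpha^{-1}$.

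The essential point is that, in contrast with the even-$r$ theorems, here $r=1$ is odd, so $\beta^r=\beta=-1/\alpha\neq 1/\alpha$: the two arguments $-\alpha$ and $1/\alpha$ are not reciprocals, and the Bernoulli-polynomial bridge \eqref{eq.bernpoly1} does not apply. I must therefore supply the special values directly,
\[
\Li_2(1/\alpha)=\frac{\pi^2}{10}-\log^2\alpha,\qquad \Li_2(-\alpha)=-\frac{\pi^2}{10}-\log^2\alpha,
\]
which give $\Li_2(-\alpha)+\Li_2(1/\alpha)=-2\log^2\alpha$ and $\Li_2(-\alpha)-\Li_2(1/\alpha)=-\pi^2/5$.

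The main work, and the only real obstacle, is establishing these two closed forms. I would obtain them from the standard functional equations of the dilogarithm applied to $a=1/\alpha$, which satisfies $a^2=1-a$ and $1+a=\alpha=1/a$. The reflection formula $\Li_2(a)+\Li_2(1-a)=\pi^2/6-\log a\,\log(1-a)$ gives $\Li_2(a)+\Li_2(a^2)=\pi^2/6-2\log^2\alpha$; Landen's identity $\Li_2(x)+\Li_2\!\bigl(\tfrac{x}{x-1}\bigr)=-\tfrac12\log^2(1-x)$ at $x=-a$ gives $\Li_2(-a)+\Li_2(a^2)=-\tfrac12\log^2\alpha$; and the duplication formula $\Li_2(a^2)=2\Li_2(a)+2\Li_2(-a)$ closes the system. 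Solving these three linear relations yields $\Li_2(a^2)=\pi^2/15-\log^2\alpha$, $\Li_2(1/\alpha)=\pi^2/10-\log^2\alpha$ and $\Li_2(-1/\alpha)=-\pi^2/15+\tfrac12\log^2\alpha$; finally the inversion formula $\Li_2(z)+\Li_2(1/z)=-\pi^2/6-\tfrac12\log^2(-z)$ at $z=-\alpha$ converts $\Li_2(-1/\alpha)$ into $\Li_2(-\alpha)=-\pi^2/10-\log^2\alpha$.

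With the two combinations in hand, substitution into \eqref{eq.h9ujlfl} (after the sign flip) gives
\[
\sum_{j=1}^\infty\frac{(-1)^{j-1}}{j^2}F_{j+s}=F_s\log^2\alpha+\frac{L_s}{2\sqrt5}\cdot\frac{\pi^2}{5},
\]
and the rationalization $1/(10\sqrt5)=\sqrt5/50$ reproduces \eqref{eq.jh3oe13}; the Lucas identity \eqref{eq.r20gb7l} follows the same way from \eqref{eq.nr41hks}. I expect the dilogarithm evaluations to be the crux: once those constants are pinned down the remainder is bookkeeping, so care with the golden-ratio functional-equation algebra (and with the analytic-continuation reading of the divergent-looking series) is where the argument can go wrong.
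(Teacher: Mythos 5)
Your proposal is correct and follows essentially the same route as the paper: specialize Theorem~\ref{thm.main} to $r=1$, $k=2$, $z=-1$, then pin down $\Li_2(-\alpha)$ and $\Li_2(-\beta)=\Li_2(1/\alpha)$ by solving a small linear system of dilogarithm functional equations, your choice (Euler reflection $+$ Landen $+$ duplication, then inversion) being linearly equivalent to the paper's (Landen $+$ equation~\eqref{eq.gv6ljpd} $+$ duplication, then a second Landen application) and yielding the identical special values \eqref{eq.ikpn2e1}--\eqref{eq.x9vx7ia}. Your explicit caveat that the series must be read through analytic continuation of the polylogarithm (since $|z|=1>\alpha^{-1}$ and the terms $F_{j+s}/j^2$ grow) is in fact more careful than the paper, whose own proof applies Theorem~\ref{thm.main} at $z=-1$, $r=1$ without comment.
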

\begin{proof}
Using Theorem \ref{thm.main}, the left hand sides of \eqref{eq.jh3oe13} and \eqref{eq.r20gb7l} can be expressed as
\begin{equation}\label{eq.bzhxtze}
\sum_{j = 1}^\infty  {\frac{{( - 1)^{j - 1} }}{{j^2 }}F_{j + s} }  = \frac{{F_s }}{2}\left( {\Li_2 ( - \alpha ) + \Li_2 ( - \beta )} \right) + \frac{{L_s }}{{2\sqrt 5 }}\left( {\Li_2 ( - \alpha ) - \Li_2 ( - \beta )} \right)\,,
\end{equation}

\begin{equation}\label{eq.ru5rwan}
\sum_{j = 1}^\infty  {\frac{{( - 1)^{j - 1} }}{{j^2 }}L_{j + s} }  = \frac{{L_s }}{2}\left( {\Li_2 ( - \alpha ) + \Li_2 ( - \beta )} \right) + \frac{{F_s \sqrt 5 }}{2}\left( {\Li_2 ( - \alpha ) - \Li_2 ( - \beta )} \right)\,.
\end{equation}
It turns out that $\Li_2 ( - \alpha )$ and $\Li_2 ( - \beta)$ are elementary; and therefore the sums can be expressed in terms of elementary constants. We could use the values from Lewin~\cite{lewin81} but his results contain some misprints and so we have chosen to derive the needed values from scratch, as follows.

\medskip
 
Setting $x=\beta^2$ in the dilogarithm functional equation
\begin{equation}\label{eq.dr5tsju}
\Li_2 (x) + \Li_2 \left( {\frac{x}{{x - 1}}} \right) =  - \frac{1}{2}\log ^2 (1 - x)\,,
\end{equation}
gives
\begin{equation}\label{eq.hmfjin6}
\Li_2 (\beta ) + \Li_2 \left( {\beta ^2 } \right) =  - \frac{1}{2}\log ^2 \alpha\,.
\end{equation}
Use of $x=-\beta$ in the functional equation
\begin{equation}\label{eq.gv6ljpd}
\Li_2 \left( {\frac{1}{{1 + x}}} \right) - \Li_2 ( - x) = \frac{{\pi ^2 }}{6} - \frac{1}{2}\log (1 + x)\log \left( {\frac{{1 + x}}{{x^2 }}} \right),\quad x>0\,,
\end{equation}
and $x=\beta$ in the dilogarithm duplication formula
\begin{equation}\label{eq.qblycs8}
\Li_2 (x) + \Li_2 ( - x) = \frac{1}{2}\Li_2 (x^2 )
\end{equation}
gives
\begin{equation}\label{eq.vqzpvm4}
\Li_2 ( - \beta ) - \Li_2 (\beta ) = \frac{{\pi ^2 }}{6} - \frac{3}{2}\log ^2 \alpha
\end{equation}
and
\begin{equation}\label{eq.izvhw3t}
2\Li_2 ( - \beta ) + 2\Li_2 (\beta ) - \Li_2 (\beta ^2 ) = 0\,.
\end{equation}
Solving \eqref{eq.hmfjin6}, \eqref{eq.vqzpvm4} and \eqref{eq.izvhw3t} simultaneously for $\Li_2 ( - \beta )$, $\Li_2 ( \beta )$ and $\Li_2 ( \beta ^2 )$, we find
\begin{equation}\label{eq.ikpn2e1}
\Li_2 ( - \beta ) = \frac{{\pi ^2 }}{{10}} - \log ^2 \alpha\,, 
\end{equation}

\begin{equation}\label{eq.v6t8cqv}
\Li_2 (\beta ) =  - \frac{{\pi ^2 }}{{15}} + \frac{1}{2}\log ^2 \alpha \,,
\end{equation}
and
\begin{equation}\label{eq.mzu3bnd}
\Li_2 (\beta ^2 ) = \frac{{\pi ^2 }}{{15}} - \log ^2 \alpha\,. 
\end{equation}
Putting $x=-\alpha$ in \eqref{eq.dr5tsju}, noting \eqref{eq.ikpn2e1}, gives 
\begin{equation}\label{eq.x9vx7ia}
\Li_2 ( - \alpha ) =  - \frac{{\pi ^2 }}{{10}} - \log ^2 \alpha\,. 
\end{equation}
Thus,
\begin{equation}\label{eq.pryrigg}
\Li_2 ( - \alpha ) + \Li_2 ( - \beta ) =  - 2\log ^2 \alpha\,,
\end{equation}
and
\begin{equation}\label{eq.vumf2wl}
\Li_2 ( - \alpha ) - \Li_2 ( - \beta ) =  - \frac{{\pi ^2 }}{5}\,.
\end{equation}
Identities \eqref{eq.jh3oe13} and \eqref{eq.r20gb7l} now follow once we plug in \eqref{eq.pryrigg} and \eqref{eq.vumf2wl} in \eqref{eq.bzhxtze} and \eqref{eq.ru5rwan}.

\medskip

It may be noted that identity \eqref{eq.pryrigg} could have been obtained directly by setting $x=-\alpha$ in \eqref{eq.dr5tsju}.
\end{proof}
\begin{theorem}
We have
\begin{equation}\label{eq.vw9jb5e}
\sum_{j = 1}^\infty  {\frac{{( - 1)^{j - 1} }}{{j^2 }}L_{2j} }  = \frac{{\pi ^2 }}{6} + 2\log ^2 \alpha\,,
\end{equation}

\begin{equation}\label{eq.seqc5i4}
\sum_{j = 1}^\infty  {\frac{{( - 1)^{j - 1} }}{{j^2 }}L_{3j} }  = \frac{{\pi ^2 }}{{12}} + 6\log ^2 \alpha\,.
\end{equation}
\end{theorem}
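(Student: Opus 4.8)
The plan is to obtain both series as the $z=-1$, $k=2$ instance of the Lucas generating identity \eqref{eq.partluca}, namely
\[
\sum_{j=1}^\infty \frac{(-1)^{j-1}}{j^2} L_{rj} = -\Li_2(-\alpha^r) - \Li_2(-\beta^r),
\]
and then to evaluate the two dilogarithms. For \eqref{eq.vw9jb5e} this is immediate: since $r=2$ is even and $k=2$ is even, the series is already covered by \eqref{eq.cq80nxw}, so it suffices to substitute and simplify $B_2\!\left(\tfrac12 + \tfrac{\log\alpha}{\pi i}\right) = -\tfrac{\log^2\alpha}{\pi^2} - \tfrac1{12}$, which returns $\pi^2/6 + 2\log^2\alpha$. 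Equivalently, $\beta^2 = 1/\alpha^2$ lets one apply the $k=2$ case of \eqref{eq.bernpoly1} at $t=-\alpha^2$ directly to the pair $\Li_2(-\alpha^2)+\Li_2(-\beta^2)$.

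The identity \eqref{eq.seqc5i4} is the substantive case, because $r=3$ is odd and hence $\alpha^3\beta^3 = -1$, so $\beta^3 = -1/\alpha^3$ and the two arguments $-\alpha^3$ and $-\beta^3$ are no longer reciprocals; the convenient pairing exploited for even $r$ is lost. Following the from-scratch philosophy of Theorem \ref{thm.xzk5qh8}, I would first apply \eqref{eq.bernpoly1} with $k=2$ at $t=-\alpha^3$. Since $1/t = -1/\alpha^3 = \beta^3$, this produces
\[
\Li_2(-\alpha^3) + \Li_2(\beta^3) = -\frac{\pi^2}{6} - \frac92\log^2\alpha,
\]
so that substituting for $\Li_2(-\alpha^3)$ reduces the target series to
\[
\sum_{j=1}^\infty \frac{(-1)^{j-1}}{j^2} L_{3j} = \frac{\pi^2}{6} + \frac92\log^2\alpha + \bigl(\Li_2(\beta^3) - \Li_2(-\beta^3)\bigr).
\]

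It remains to evaluate the odd combination $\Li_2(\beta^3) - \Li_2(-\beta^3)$. Since $-\beta^3 = 1/\alpha^3$ and $\beta^3 = -1/\alpha^3$, this equals $-\bigl(\Li_2(1/\alpha^3) - \Li_2(-1/\alpha^3)\bigr)$, the negated odd part of the dilogarithm at the cubic argument $1/\alpha^3$. The plan is to relate it to the odd part at the linear argument $1/\alpha$, which is already known: by \eqref{eq.vqzpvm4}, $\Li_2(1/\alpha) - \Li_2(-1/\alpha) = \Li_2(-\beta) - \Li_2(\beta) = \pi^2/6 - \tfrac32\log^2\alpha$. The link is the Landen-type transformation $x \mapsto (1-x)/(1+x)$, which sends $x = 1/\alpha$ to $(1-1/\alpha)/(1+1/\alpha) = \beta^2/\alpha = 1/\alpha^3$; the associated functional equation for the odd part (obtainable by combining \eqref{eq.dr5tsju}, \eqref{eq.gv6ljpd} and the duplication formula \eqref{eq.qblycs8}) expresses $\Li_2(1/\alpha^3) - \Li_2(-1/\alpha^3)$ in terms of $\Li_2(1/\alpha) - \Li_2(-1/\alpha)$ plus elementary $\pi^2$ and $\log^2\alpha$ terms. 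Solving gives $\Li_2(1/\alpha^3) - \Li_2(-1/\alpha^3) = \pi^2/12 - \tfrac32\log^2\alpha$, hence $\Li_2(\beta^3) - \Li_2(-\beta^3) = -\pi^2/12 + \tfrac32\log^2\alpha$, and inserting this collapses the series to $\pi^2/12 + 6\log^2\alpha$.

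The main obstacle is precisely this last evaluation. The duplication formula reaches only even powers of the golden ratio, so it cannot produce $\beta^3$ directly, and the fixed point of the Landen map $x\mapsto(1-x)/(1+x)$ — as well as of the map $x\mapsto 1/(1+x)$ underlying \eqref{eq.gv6ljpd} — sits at the golden ratio itself rather than at its cube; consequently the functional equations do not close up in a single step as they did in Theorem \ref{thm.xzk5qh8}. Intermediate applications generate auxiliary arguments such as $\alpha/2$ and $\beta^2/2$ together with spurious $\log 2$ contributions, and the delicate point is to route the computation — via the single Landen step connecting $1/\alpha$ and $1/\alpha^3$ — so that these auxiliary terms cancel and only $\pi^2$ and $\log^2\alpha$ survive.
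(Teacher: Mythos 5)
Your treatment of \eqref{eq.vw9jb5e} is complete and correct: for $r=2$ the arguments $-\alpha^2$ and $-\beta^2$ are reciprocals, so the sum is an instance of \eqref{eq.cq80nxw} (equivalently, of \eqref{eq.bernpoly1} with $k=2$ at $t=-\alpha^2$), and your computation $B_2\bigl(\tfrac12+\tfrac{\log\alpha}{\pi i}\bigr)=-\tfrac{\log^2\alpha}{\pi^2}-\tfrac1{12}$ is right; this is essentially the paper's own argument, which uses the dilogarithm inversion formula \eqref{eq.kyxha25}, i.e.\ the $k=2$ case of the same fact. Your reduction of \eqref{eq.seqc5i4} is also correct as far as it goes: \eqref{eq.bernpoly1} at $t=-\alpha^3$ (whose reciprocal is $\beta^3$) gives $\Li_2(-\alpha^3)+\Li_2(\beta^3)=-\tfrac{\pi^2}{6}-\tfrac92\log^2\alpha$, so the series equals $\tfrac{\pi^2}{6}+\tfrac92\log^2\alpha+\Li_2(\beta^3)-\Li_2(-\beta^3)$, and the asserted value $\Li_2(\beta^3)-\Li_2(-\beta^3)=-\tfrac{\pi^2}{12}+\tfrac32\log^2\alpha$ is true (it is Landen's classical evaluation of $\chi_2(\sqrt5-2)$). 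But this last value is exactly where your proof stops being a proof: you claim the needed odd-part functional equation for the map $x\mapsto(1-x)/(1+x)$ is ``obtainable by combining \eqref{eq.dr5tsju}, \eqref{eq.gv6ljpd} and \eqref{eq.qblycs8}'', and you then concede that these equations ``do not close up in a single step'' and that delicate cancellations remain. The key lemma is never derived, so the substantive half of the theorem is unproven.

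The gap is not cosmetic, because the advertised route is at best unsubstantiated. The M\"obius substitutions underlying \eqref{eq.dr5tsju} and \eqref{eq.gv6ljpd} (together with $x\mapsto-x$) generate only unimodular integer transformations, and the Landen map $(1-x)/(1+x)$, having determinant $-2$, is not projectively equivalent to any of them; duplication does not repair this. Concretely, applying \eqref{eq.dr5tsju} at the very auxiliary points you mention, $x=\beta/2$ and $x=\beta^2/2$ (whose images under $x\mapsto x/(x-1)$ are $-\beta^3$ and $\beta^3$, with $1-\beta/2=\alpha^2/2$ and $1-\beta^2/2=\alpha/2$), and feeding the results back into your Landen-step expression for $\Li_2(\beta^3)-\Li_2(-\beta^3)$ produces the tautology $0=0$: the one-variable identities are mutually consistent but cannot pin down this value. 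Something genuinely stronger is required --- either an independent proof of Landen's $\chi_2$ functional equation (e.g.\ by differentiating $\chi_2\bigl(\tfrac{1-x}{1+x}\bigr)+\chi_2(x)+\tfrac12\log x\log\tfrac{1-x}{1+x}$ and fixing the constant at $x=1$), or a two-variable identity. The paper chooses the latter: it applies \eqref{eq.zmv5iij} with $x=-\beta^3$, $y=-\alpha^3$, so that $xy=-1$, $1-xy=2$, and the two extra arguments collapse to $-\beta$ and $-\alpha$ (since $1+\alpha^3=2\alpha^2$ and $1+\beta^3=2\beta^2$); combining with $\Li_2(-\alpha)+\Li_2(-\beta)=-2\log^2\alpha$ from \eqref{eq.pryrigg} yields $\Li_2(-\alpha^3)+\Li_2(-\beta^3)=-\tfrac{\pi^2}{12}-6\log^2\alpha$ in one step. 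To complete your argument you must supply a proof of the Landen value by one of these means.
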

\begin{proof}
Identity~\eqref{eq.partluca} with $k=2$, $z=-1$ and $r=2$ and $r=3$, in turn, gives
\begin{equation}\label{eq.mk0roai}
\sum_{j = 1}^\infty  {\frac{{( - 1)^{j - 1} }}{{j^2 }}L_{2j} }  = \Li_2 ( - \alpha ^2 ) + \Li_2 ( - \beta ^2 )
\end{equation}
and
\begin{equation}\label{eq.bstx15o}
\sum_{j = 1}^\infty  {\frac{{( - 1)^{j - 1} }}{{j^2 }}L_{3j} }  = \Li_2 ( - \alpha ^3 ) + \Li_2 ( - \beta ^3 )\,.
\end{equation}
Setting $x=\alpha^2$ in the dilogarithm inversion formula
\begin{equation}\label{eq.kyxha25}
\Li_2 ( - x) + \Li_2 ( - 1/x) =  - \pi ^2 /6 - \frac{1}{2}\log ^2 x,\quad x>0\,,
\end{equation}
produces
\begin{equation}\label{eq.vwcanf8}
\Li_2 ( - \alpha ^2 ) + \Li_2 ( - \beta ^2 ) =  - \pi ^2 /6 - 2\log ^2 \alpha\,,
\end{equation}
which, when plugged in \eqref{eq.mk0roai} gives \eqref{eq.vw9jb5e} while the use of $x=-\beta^3$ and $y=-\alpha^3$ in the functional equation
\begin{equation}\label{eq.zmv5iij}
\begin{split}
\Li_2 (xy) = \Li_2 (x) + & \Li_2 (y) - \Li_2 \left( {\frac{{x(1 - y)}}{{1 - xy}}} \right) - \Li_2 \left( {\frac{{y(1 - x)}}{{1 - xy}}} \right)\\
& - \log \left( {\frac{{1 - x}}{{1 - xy}}} \right)\log \left( {\frac{{1 - y}}{{1 - xy}}} \right)
\end{split}
\end{equation}
gives
\begin{equation}\label{eq.pl9zik3}
\Li_2 ( - \alpha ^3 ) + \Li_2 ( - \beta ^3 ) =  - \frac{{\pi ^2 }}{{12}} - 6\log ^2 \alpha\,,
\end{equation}
which together with \eqref{eq.bstx15o} yields identity~\eqref{eq.seqc5i4}.
\end{proof}
\begin{theorem}\label{thm.vpcaqaj}
We have
\[
\sum_{j = 1}^\infty  {\frac{{L_j }}{{2^j j^2 }}}  = \frac{{\pi ^2 }}{{12}} + 2\log ^2 \alpha  - \log ^2 2\,,
\]

\end{theorem}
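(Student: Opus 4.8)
The plan is to convert the series into a pair of dilogarithms via the machinery of Theorem~\ref{thm.main}, and then to reduce those dilogarithms—through a single functional equation—to a quantity that the paper has already evaluated in closed form.

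First I would apply the particular case \eqref{eq.partluca} with $r=1$, $k=2$ and $z=1/2$. This is legitimate because $|z|=1/2<\alpha^{-1}$, so that the hypothesis $|z|<\alpha^{-r}$ of Theorem~\ref{thm.main} holds. It yields at once
\[
\sum_{j=1}^\infty \frac{L_j}{2^j j^2} = \Li_2\!\left(\frac{\alpha}{2}\right) + \Li_2\!\left(\frac{\beta}{2}\right),
\]
so the entire task reduces to evaluating $\Li_2(\alpha/2)+\Li_2(\beta/2)$ in closed form.

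The crux is the algebraic observation that, because $\alpha^2=(3+\sqrt5)/2$, $\beta^2=(3-\sqrt5)/2$ and $\alpha\beta=-1$, one has
\[
1-\frac{\alpha}{2}=\frac{\beta^2}{2},\qquad \frac{\alpha/2}{\alpha/2-1}=-\alpha^3,
\]
and symmetrically $1-\beta/2=\alpha^2/2$ together with $\dfrac{\beta/2}{\beta/2-1}=-\beta^3$. I would therefore apply the functional equation \eqref{eq.dr5tsju} once with $x=\alpha/2$ and once with $x=\beta/2$, obtaining
\[
\Li_2\!\left(\frac{\alpha}{2}\right)+\Li_2(-\alpha^3)=-\frac12\log^2\!\left(\frac{\beta^2}{2}\right),\qquad \Li_2\!\left(\frac{\beta}{2}\right)+\Li_2(-\beta^3)=-\frac12\log^2\!\left(\frac{\alpha^2}{2}\right).
\]
Adding these two equations collapses the unwanted terms into the single sum $\Li_2(-\alpha^3)+\Li_2(-\beta^3)$, whose value $-\pi^2/12-6\log^2\alpha$ has already been established in \eqref{eq.pl9zik3}. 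Solving for $\Li_2(\alpha/2)+\Li_2(\beta/2)$ then gives the answer up to the logarithmic bookkeeping.

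What remains is routine: using $\log(\alpha^2/2)=2\log\alpha-\log 2$ and $\log(\beta^2/2)=-2\log\alpha-\log 2$ (note $\beta^2=\alpha^{-2}$), the two squared logarithms sum to $8\log^2\alpha+2\log^2 2$, so the right-hand side contributes $-4\log^2\alpha-\log^2 2$; combining this with $\pi^2/12+6\log^2\alpha$ produces $\pi^2/12+2\log^2\alpha-\log^2 2$, as claimed. The main point demanding care—and the step I would double-check—is the branch of the logarithm at the argument $\beta^2/2$, since $\beta<0$; this is harmless here because only $\beta^2$ (a positive real) enters the argument of $\Li_2$ and of the logarithm, but it must be verified explicitly so that the real identity \eqref{eq.dr5tsju} is being applied to admissible real arguments. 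Beyond that, the proof is purely a matter of substituting the known dilogarithm value and simplifying.
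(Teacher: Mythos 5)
Your proof is correct, and after the common first step it takes a genuinely different route from the paper's. Both proofs begin identically, reducing the series via \eqref{eq.partluca} (with $r=1$, $k=2$, $z=1/2$) to evaluating $\Li_2(\alpha/2)+\Li_2(\beta/2)$. The paper then finishes in a single stroke: it substitutes $x=\alpha/2$, $y=\beta/2$ into the two-variable functional equation \eqref{eq.q1sdspz} and reads off the value directly. You instead apply the one-variable Landen equation \eqref{eq.dr5tsju} twice, using the clean algebraic facts $1-\alpha/2=\beta^2/2$, $\frac{\alpha/2}{\alpha/2-1}=-\alpha^3$ (and their conjugates), which converts the problem into the sum $\Li_2(-\alpha^3)+\Li_2(-\beta^3)$ already computed in \eqref{eq.pl9zik3}; your identities and the final bookkeeping ($\log^2(\alpha^2/2)+\log^2(\beta^2/2)=8\log^2\alpha+2\log^2 2$) all check out, and your remark about the branch of the logarithm is well taken since $1-\alpha/2$ and $1-\beta/2$ are positive reals. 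What each approach buys: the paper's proof is self-contained modulo the functional equation \eqref{eq.q1sdspz} and does not depend on the preceding theorem, whereas yours is locally more elementary (only the one-variable equation \eqref{eq.dr5tsju} is invoked directly) and nicely ties the $z=1/2$ evaluation to the $L_{3j}$ evaluation; the trade-off is that your argument inherits a dependence on \eqref{eq.pl9zik3}, which the paper itself obtained from the two-variable equation \eqref{eq.zmv5iij}, so the reliance on a multi-variable dilogarithm identity is not eliminated, only pushed one theorem back.
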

\begin{proof}
Identity~\eqref{eq.partluca} with $k=2$, $z=1/2$ and $r=1$ gives
\begin{equation}\label{eq.vpcaqaj}
\sum_{j = 1}^\infty  {\frac1{{2^j j^2 }}L_j }  = \Li_2 (  \alpha /2) + \Li_2 (  \beta /2)\,.
\end{equation}
By using $x=\alpha/2$ and $y=\beta/2$ in the two-variable functional equation
\begin{equation}\label{eq.q1sdspz}
\begin{split}
\Li_2 \left( {\frac{x}{{1 - x}}\cdot\frac{y}{{1 - y}}} \right) = \Li_2 \left( {\frac{x}{{1 - y}}} \right)& + \Li_2 \left( {\frac{y}{{1 - x}}} \right) - \Li_2 (x) - \Li_2 (y)\\
& - \log (1 - x)\log (1 - y)\,,
\end{split}
\end{equation}
we find
\begin{equation}\label{eq.pzuyyyx}
\Li_2 (\alpha /2) + \Li_2 (\beta /2) = \frac{{\pi ^2 }}{{12}} + 2\log ^2 \alpha  - \log^22\,,
\end{equation}
which, plugged in \eqref{eq.vpcaqaj} gives the identity of the theorem.
\end{proof}
\begin{theorem}\label{thm.j7p1nrz}
Let $r$ be an even integer. Then,
\[
\sum_{j = 1}^\infty  {\frac{{L_{rj} }}{{L_r^j j^2 }}}  = \frac{{\pi ^2 }}{6} + r^2\log^2\alpha - \log^2L_r\,.
\]

\end{theorem}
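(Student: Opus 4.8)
The plan is to specialize the master identity \eqref{eq.partluca} and then collapse the resulting pair of dilogarithms through a single reflection-type evaluation. First I would set $k=2$ and $z=1/L_r$ in \eqref{eq.partluca}. Because $r$ is even we have $\beta^r=\alpha^{-r}>0$, hence $L_r=\alpha^r+\alpha^{-r}>\alpha^r$, so that $|z|=1/L_r<\alpha^{-r}$ and the convergence hypothesis of Theorem~\ref{thm.main} is satisfied. This produces
\[
\sum_{j=1}^\infty \frac{L_{rj}}{L_r^j j^2}=\Li_2\left(\frac{\alpha^r}{L_r}\right)+\Li_2\left(\frac{\beta^r}{L_r}\right).
\]

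The decisive observation is that the two arguments sum to unity, since $\alpha^r/L_r+\beta^r/L_r=(\alpha^r+\beta^r)/L_r=1$, and for even $r$ both lie in $(0,1)$. I would exploit this exactly as in the proof of Theorem~\ref{thm.vpcaqaj}, by substituting $x=\alpha^r/L_r$ and $y=\beta^r/L_r$ into the two-variable functional equation \eqref{eq.q1sdspz}. Here $1-x=\beta^r/L_r=y$ and $1-y=\alpha^r/L_r=x$, so every composite argument collapses: one computes $x/(1-x)=\alpha^{2r}$ and $y/(1-y)=\beta^{2r}$, whence the left-hand argument is $(\alpha\beta)^{2r}=1$; likewise $x/(1-y)=1$ and $y/(1-x)=1$. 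Using $\Li_2(1)=\pi^2/6$ throughout, the functional equation reduces to a relation which rearranges to
\[
\Li_2\left(\frac{\alpha^r}{L_r}\right)+\Li_2\left(\frac{\beta^r}{L_r}\right)=\frac{\pi^2}{6}-\log\left(\frac{\alpha^r}{L_r}\right)\log\left(\frac{\beta^r}{L_r}\right).
\]

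It then remains only to simplify the logarithmic product. Writing $\log(\alpha^r/L_r)=r\log\alpha-\log L_r$ and $\log(\beta^r/L_r)=-r\log\alpha-\log L_r$, which is legitimate because $\beta^r=\alpha^{-r}>0$ for even $r$, their product equals $\log^2 L_r-r^2\log^2\alpha$, so its negative is $r^2\log^2\alpha-\log^2 L_r$. Combining this with the preceding display yields the stated closed form. I do not anticipate any real obstacle: the only points requiring care are confirming the convergence inequality $1/L_r<\alpha^{-r}$, which is precisely what forces the evenness hypothesis on $r$, and recognizing the sum-to-one structure that makes \eqref{eq.q1sdspz} degenerate. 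Once these are in place the remaining computation is mechanical and entirely parallel to Theorem~\ref{thm.vpcaqaj}.
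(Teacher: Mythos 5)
Your proof is correct, and its skeleton matches the paper's: specialize \eqref{eq.partluca} with $k=2$, $z=1/L_r$, observe that the two dilogarithm arguments $\alpha^r/L_r$ and $\beta^r/L_r$ are positive and sum to $1$, and conclude from the identity $\Li_2(x)+\Li_2(1-x)=\pi^2/6-\log x\log(1-x)$ followed by the elementary expansion of the logarithms. The one place you genuinely diverge is the source of that dilogarithm identity: the paper simply quotes the reflection formula \eqref{eq.ekmumc0} with $x=\alpha^r/L_r$, whereas you re-derive it as the degenerate case of the two-variable (Abel-type) functional equation \eqref{eq.q1sdspz} with $x=\alpha^r/L_r$, $y=\beta^r/L_r$, where every composite argument collapses to $1$. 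Your route is slightly longer but has two merits: it reuses only machinery the paper already deploys (in Theorem \ref{thm.vpcaqaj}), making the parallel between the two proofs explicit, and you verify the convergence hypothesis $1/L_r<\alpha^{-r}$ (via $L_r=\alpha^r+\alpha^{-r}>\alpha^r$ for even $r$), a point the paper passes over silently even though it is exactly where the evenness of $r$ is forced. The collapse of \eqref{eq.q1sdspz} at $x+y=1$ is legitimate, since all arguments lie in $(0,1]$ and $\Li_2$ is continuous there with $\Li_2(1)=\pi^2/6$, so both derivations are sound; the paper's is just a one-line citation where yours is a short computation.
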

\begin{proof}
Set $x=\alpha^r/L_r$ in the dilogarithm reflection formula
\begin{equation}\label{eq.ekmumc0}
\Li_2 (x) + \Li_2 (1 - x) = \frac{{\pi ^2 }}{6} - \log x\log (1 - x)\,,
\end{equation}
to obtain
\[
\Li_2 (\alpha ^r /L_r ) + \Li_2 (\beta ^r /L_r ) = \frac{{\pi ^2 }}{6} - \log (\alpha ^r /L_r )\log (\beta ^r /L_r )\,.
\]
The identity now follows from \eqref{eq.partluca} upon setting $k=2$, $z=1/L_r$.
\end{proof}
\begin{theorem}\label{thm.ejojlyt}
We have
\[
\sum_{j = 1}^\infty  {\frac{{( - 1)^{j - 1} }}{{j^3 }}L_j }  = \frac{1}{5}\left( {\pi ^2 \log \alpha  - \zeta (3)} \right)\,.
\]

\end{theorem}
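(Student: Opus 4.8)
The plan is to specialize identity~\eqref{eq.partluca} to $k=3$, $z=-1$, $r=1$, which gives
\[
\sum_{j=1}^\infty \frac{(-1)^{j-1}}{j^3}L_j = -\bigl(\Li_3(-\alpha) + \Li_3(-\beta)\bigr),
\]
so the whole problem reduces to evaluating the single constant $\Li_3(-\alpha)+\Li_3(-\beta)$ in closed form. Since $r=1$ is odd, relations~\eqref{eq.bernpoly1}--\eqref{eq.bernpoly2} do not apply directly, and I would instead lean on the standard functional equations of the trilogarithm, deriving the needed special value from scratch (as was done for the dilogarithm in Theorem~\ref{thm.xzk5qh8}), exploiting the golden-ratio identities $\alpha\beta=-1$, $\alpha+\beta=1$, $-\beta=1/\alpha$, $-\alpha=1/\beta$, and $\beta^2=1+\beta=1-1/\alpha$.

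First I would obtain $\Li_3(\beta^2)$ explicitly. Evaluating the trilogarithm Landen identity
\[
\Li_3(x)+\Li_3(1-x)+\Li_3\!\left(\tfrac{x-1}{x}\right)=\zeta(3)+\frac{\pi^2}{6}\log x+\frac16\log^3 x-\frac12\log^2 x\,\log(1-x)
\]
at $x=1/\alpha=-\beta$, where $1-x=\beta^2$ and $(x-1)/x=\beta$, yields a relation among $\Li_3(-\beta)$, $\Li_3(\beta^2)$ and $\Li_3(\beta)$. Combining this with the duplication formula $\Li_3(\beta)+\Li_3(-\beta)=\tfrac14\Li_3(\beta^2)$ makes the terms $\Li_3(\pm\beta)$ cancel, collapsing the three-term relation into the closed form
\[
\Li_3(\beta^2)=\frac45\zeta(3)+\frac23\log^3\alpha-\frac{2\pi^2}{15}\log\alpha.
\]

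Next I would relate $\Li_3(-\alpha)$ to $\Li_3(\beta)$ through the trilogarithm inversion formula
\[
\Li_3(-x)-\Li_3(-1/x)=-\frac16\log^3 x-\frac{\pi^2}{6}\log x,\qquad x>0,
\]
taken at $x=\alpha$, where $-1/\alpha=\beta$. Adding $\Li_3(-\beta)$ to both sides and invoking the duplication formula once more to replace $\Li_3(\beta)+\Li_3(-\beta)$ by $\tfrac14\Li_3(\beta^2)$, the individual value $\Li_3(\beta)$ drops out and everything is expressed through $\Li_3(\beta^2)$. Substituting the closed form above, the $\log^3\alpha$ contributions cancel and the $\pi^2\log\alpha$ terms combine to give
\[
\Li_3(-\alpha)+\Li_3(-\beta)=\frac15\bigl(\zeta(3)-\pi^2\log\alpha\bigr),
\]
whose negation is exactly the asserted evaluation.

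The main obstacle is the first step. Unlike the dilogarithm, $\Li_3(\beta^2)$ is not reducible by a single inversion or reflection; one must select the Landen triple so that its three arguments are precisely the golden-ratio values $1/\alpha$, $\beta^2$, $\beta$, and then use duplication to force the unwanted terms to cancel. Achieving this alignment—while keeping careful track of the logarithmic terms, which are genuinely nonzero here—is the delicate part. Once $\Li_3(\beta^2)$ is in hand, the remaining manipulations are routine applications of inversion and duplication.
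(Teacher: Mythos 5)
Your proposal is correct, and its skeleton is the same as the paper's: reduce the sum via \eqref{eq.partluca} to the single constant $\Li_3(-\alpha)+\Li_3(-\beta)$, then pin that constant down by combining the trilogarithm duplication formula at $x=\beta$ with the inversion formula (your choice $x=\alpha$ and the paper's $x=-\beta$ give the same relation), and finally substitute the closed form of $\Li_3(\beta^2)$. Two differences are worth noting. First, the paper simply quotes Lewin's Formula 6.13 for $\Li_3(\beta^2)$ (its equation \eqref{eq.nqpxwc0}), whereas you derive it: evaluating your Landen-type identity at $x=-\beta=1/\alpha$, whose three arguments are then $-\beta$, $1-x=\beta^2$ and $(x-1)/x=\beta$, and eliminating $\Li_3(\beta)+\Li_3(-\beta)$ by duplication gives $\tfrac{5}{4}\Li_3(\beta^2)=\zeta(3)-\tfrac{\pi^2}{6}\log\alpha+\tfrac{5}{6}\log^3\alpha$, which is exactly \eqref{eq.nqpxwc0}; so your argument is self-contained where the paper's is not. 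Second, your sign bookkeeping is actually better than the paper's: since \eqref{eq.partluca} at $z=-1$ produces $\sum_{j\ge 1}(-1)^{j}L_j/j^3$ on the left, the alternating sum of the theorem equals $-\bigl(\Li_3(-\alpha)+\Li_3(-\beta)\bigr)$, as you write; the paper's intermediate equation \eqref{eq.zvgs4vi} omits this minus sign, so its proof as printed would deliver $\tfrac15\bigl(\zeta(3)-\pi^2\log\alpha\bigr)$, the negative of the (correct) stated result. Your explicit negation step repairs this misprint and lands on $\tfrac15\bigl(\pi^2\log\alpha-\zeta(3)\bigr)$, which also agrees with the numerical value $\approx 0.7095$ of the regularized sum.
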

\begin{proof}
Setting $k=3$, $z=-1$ in identity \eqref{eq.partluca} gives
\begin{equation}\label{eq.zvgs4vi}
\sum_{j = 1}^\infty  {\frac{{( - 1)^{j - 1} }}{{j^3 }}L_j }  = \Li_3 ( - \alpha ) + \Li_3 ( - \beta )\,.
\end{equation}
It is known that (see Lewin \cite[Formula 6.13]{lewin81} for a derivation):
\begin{equation}\label{eq.nqpxwc0}
\Li_3 (\beta ^2 ) = \frac{4}{5}\zeta (3) - \frac{{2\pi ^2 }}{{15}}\log\alpha + \frac{2}{3}\log ^3\alpha\,.
\end{equation}
Setting $x=\beta$ in the trilogarithm duplication formula
\begin{equation}\label{eq.v4uhxet}
\Li_3 (x) + \Li_3 ( - x)=\frac{1}{4}\Li_3 (x^2 )\,,
\end{equation}
gives
\begin{equation}\label{eq.rvva13a}
\Li_3 (\beta ) + \Li_3 ( - \beta ) = \frac{1}{4}\Li_3 (\beta ^2 )\,.
\end{equation}
Setting $x=-\beta$ in the trilogarithm inversion formula
\begin{equation}\label{eq.p0rydoq}
\Li_3 ( - x) - \Li_3 ( - 1/x) =  - \frac{{\pi ^2 }}{6}\log x - \frac{1}{6}\log ^3 x
\end{equation}
gives
\begin{equation}\label{eq.mvj6bil}
\Li_3 (\beta ) - \Li_3 ( - \alpha ) =  \frac{{\pi ^2 }}{6}\log\alpha + \frac{1}{6}\log ^3\alpha\,.
\end{equation}
Subtraction of \eqref{eq.mvj6bil} from \eqref{eq.rvva13a} produces
\begin{equation}\label{eq.agpln7g}
\Li_3 ( - \alpha ) + \Li_3 ( - \beta ) = \frac{1}{5}\zeta (3) - \frac{{\pi ^2 }}{5}\log\alpha\,,
\end{equation}
which, with \eqref{eq.zvgs4vi}, gives the identity of the theorem.

\end{proof}

\hrule

\noindent 2010 {\it Mathematics Subject Classification}:
Primary 11B39; Secondary 11B37.

\noindent \emph{Keywords: }
Fibonacci number, Lucas number, summation identity, series, generating function, polylogarithm, bernoulli polynomial, bernoulli number.

\hrule

\noindent Concerned with sequences: A000032, A000045

\hrule

\end{document}